\newcommand{\EE}{\mathbb{E}}
\newcommand{\NN}{\mathbb{N}}
\newcommand{\PP}{\mathbb{P}}
\newcommand{\RR}{\mathbb{R}}
\newcommand{\SaS}{\mathbb{S}}
\newcommand{\T}{\mathbb{T}}
\newcommand{\bP}{{\bf P}}
\newcommand{\D}{{\cal D}}
\newcommand{\hH}{{\cal H}}
\newcommand{\I}{{\cal I}}
\newcommand{\M}{{\cal M}}
\newcommand{\aS}{{\cal S}}
\newcommand{\htW}{{\widehat W}}
\newcommand{\bt}{\underline{\kappa}}
\newcommand{\ut}{\overline{\kappa}}
\newcommand{\Int}{{\rm int}}
\newtheorem{definition}{Definition}
\newtheorem{theorem}{Theorem}
\newtheorem{lemma}{Lemma}
\newtheorem{proof}{Proof}
\newtheorem{remark}{Remark}
\newtheorem{proposition}{Proposition}
\newtheorem{corollary}{Corollary}
\begin{document}
\begin{center}
{\Large STIT Tessellations have trivial tail $\sigma$-algebra}

\vspace{1cm}

{S. Mart{\'i}nez}\\
{Departamento Ingenier{\'\i}a Matem\'atica and Centro
Modelamiento Matem\'atico,\\  Universidad de Chile,\\
UMI 2807 CNRS, Casilla 170-3, Correo 3, Santiago, Chile.\\
Email: smartine@dim.uchile.cl} 

\vspace{0.5cm}

{Werner Nagel}\\
{Friedrich-Schiller-Universit\"at Jena,\\
Institut f\"ur Stochastik,\\
Ernst-Abbe-Platz 2,
D-07743 Jena, Germany.\\
Email: werner.nagel@uni-jena.de} 

\end{center}

\vspace{1cm}

\begin{abstract}
We consider homogeneous STIT tessellations $Y$ in the $\ell$-dimensional 
Euclidean space $\RR^\ell$ and show the triviality of the 
tail $\sigma$-algebra. This is a sharpening of the mixing result by 
Lachi{\`e}ze-Rey \cite{lr}.
\end{abstract}

Keywords: { Stochastic geometry; Random process of tessellations; 
Ergodic theory; tail $\sigma-$algebra}

AMS subject classification {60D05}{60J25, 60J75, 37A25}

\section{Introduction}
Let $Y=(Y_t: t > 0)$ be the STIT tessellation process, which is a Markov
process taking values in the space of tessellations of the $\ell$-dimensional 
Euclidean space $\RR^\ell$. The process $Y$ is spatially 
stationary (that is its law is invariant under translations of the space)
and on every polytope with nonempty interior $W$ (called a window) 
the induced tessellation process, denoted by 
$Y\wedge W=(Y_t\wedge W: t> 0)$, is a pure jump process. The process $Y$
was firstly constructed in \cite{nw} and in Section \ref{stitsection} we give 
a brief description of it and recall some of its main properties.

\medskip

In stochastic geometry, ergodic and mixing properties as well as
weak dependencies in space are studied. E.g., Heinrich et al. 
considered mixing properties 
for Voronoi and some other tessellations, Poisson cluster processes and 
germ-grain models and derived Laws of Large Numbers and Central Limit 
Theorems, see \cite{hein94, heinschm, heinmol}.

\medskip

For STIT tessellations, Lachi{\`e}ze-Rey \cite{lr} showed that they are 
mixing. Schreiber and Th{\"a}le \cite{schthaeLIM1, schthaeLIM2, schthaeLIM3} proved some 
limit theorems. They provided Central Limit Theorems for the number of vertices and the total edge length in the two-dimensional case ($\ell =2$). Furthermore, they proved that in dimensions $\ell \geq 3$ there appear non-normal limits, e.g. for the total surface area of the cells of STIT tessellations. 

\medskip

An issue is the problem of triviality of the tail $\sigma$-algebras for certain 
class of distributions. For random measures and point processes a key reference 
is the book by Daley and Vere-Jones (\cite{dvj}, pp. 207-209).

\medskip

In Section \ref{Sub1.1} of the present paper we introduce a definition for 
the tail $\sigma$-algebra ${\cal B}_{-\infty}(\T)$ on the space $\T$ of tessellations 
in the $\ell$-dimensional Euclidean space $\RR^\ell$. This definition relies 
essentially on the definition of the Borel $\sigma$-algebra with respect 
to the Fell topology on the set of closed subsets of $\RR^\ell$ (cf. \cite{sw}) 
as well as on the mentioned definition of the tail $\sigma$-algebra for 
random measures and point processes, as given in \cite{dvj}.

\medskip

Our main result is formulated in in Section \ref{trivitail}, Theorem \ref{trivtail}: 
For the distribution of a STIT tessellation, the 
tail $\sigma$-algebra ${\cal B}_{-\infty}(\T)$ is trivial, 
i.e. all its elements (the terminal events) have either probability 1 or 0. 
A detailed proof is given in Section \ref{proofs}.

\medskip

Finally, we compare STIT tessellations with Poisson hyperplane tessellations 
(PHT), and we show, that the tail $\sigma$-algebra ${\cal B}_{-\infty}(\T)$ 
is not trivial with respect to the distribution of PHT.

\section{The STIT model}\label{stitsection}

\subsection{A construction of STIT tessellations} \label{constr} Let $\RR^\ell$ 
be the $\ell$-dimensional Euclidean space, denote by $\T$ the space of 
tessellations of this space as defined in \cite{sw} (Ch. 10, Random Mosaics). A 
tessellation can be considered as a set $T$ of polytopes (the cells) with 
disjoint interiors and covering the Euclidean space, as well as a closed subset 
$\partial T$ which is the union of the cell boundaries. There is an obvious 
one-to-one relation between both ways of description of a tessellation, and their 
measurable structures can be related appropriately, see \cite{sw, martnag}.

\medskip

A compact convex polytope $W$ with non-empty interior in $\RR^\ell$,
is called a {\it window}. We can consider tessellations of $W$ and 
denote the set of all those tessellations by $\T\wedge W$. If
$T\in \T$ we denote by $T\land W$ the induced tessellation on $W$.
Its boundary is defined by 
$\partial (T\wedge W)= (\partial T \cap W)\cup {\partial W}$.

\medskip

In the present paper we will refer to the construction  given in 
\cite{nw} in all detail (for an alternative but equivalent construction see \cite{martnag}). On every window $W$ there exists
$Y\land W =(Y_t \land W: t > 0)$ a STIT tessellation process, 
it turns out to be a pure jump Markov process and hence has the strong Markov property 
(see \cite{brei}, Proposition 15.25). Each  
marginal $Y_t\land W$ takes values in $\T\wedge W$. 
Furthermore, for any $t>0$ the law of $Y_t$ is consistent with
respect to the windows, that is if $W'$ and $W$ are windows such that 
$W'\subseteq W$, then $(Y_t \wedge W) \wedge W' \sim Y_t \wedge W'$, where 
$\sim$ denotes the identity of distributions (for a proof see \cite{nw}). 
This yields the existence
of a STIT tessellation $Y_t$ of $\RR^\ell$ such that for all windows $W$ the law
of $Y_t \wedge W$ coincides with the law of the construction in the window.
A global construction for a STIT process was provided in \cite{mnw}.
A STIT tessellation process $Y=(Y_t: t>0)$ is a Markov process 
and each marginal $Y_t$ takes values in $\T$.

\medskip

Here, let us recall  roughly the construction of $Y\land W$ done in \cite{nw}. 

\medskip

Let $\Lambda$ be a (non-zero) measure
on the space of hyperplanes $\hH$ in $\RR^\ell$. It is assumed 
that $\Lambda$ is translation invariant and
possesses the following locally finiteness property:

\begin{equation}
\label{locfin1}
\Lambda([B])<\infty , \; \forall \hbox{ bounded sets } B\subset \RR^\ell, \,
\hbox{ where } \, [B]=\{H\in {\cal H}: H\cap B\neq \emptyset \}.
\end{equation}
(Notation $\subset$ means strict inclusion and $\subseteq$ inclusion
or equality).
It is further
assumed that the support set of $\Lambda$ is such that there is no line in
$\RR^\ell$ with the property that all hyperplanes
of the support are parallel to it (in order
to obtain a.s. bounded cells in the constructed tessellation, cf.
\cite{sw}, Theorem 10.3.2, which can also be applied to STIT tessellations).

\medskip

The assumptions made on $\Lambda$ imply 
$0<\Lambda([W])<\infty \, \hbox{ for every window } \, W$. 
Denote by $\Lambda_{[W]}$ the restriction of $\Lambda$ to $[W]$ and by 
$\Lambda^W=\Lambda ([W])^{-1}\Lambda_{[W]}$ 
the normalized probability measure.
Let us take two independent families of 
independent random variables $D=(d_{n,m}: n,m\in \NN)$ and 
$\tau=(\tau_{n,m}: n,m\in \NN)$, where each $d_{n,m}$ has distribution 
$\Lambda^W$ and each $\tau_{n,m}$ is exponentially distributed with 
parameter $\Lambda ([W])$.

\medskip

\noindent $\bullet$ Even if for $t=0$ the STIT tessellation $Y_0$ is not 
defined in $\RR^\ell$, we define $Y_0\wedge W= \{W\}$ the 
trivial tessellation for the window $W$. Its unique cell is denoted by 
$C^1=W$. 

\medskip

\noindent $\bullet$ Any extant cell has a random lifetime, 
and at the end of its lifetime it is 
divided by a random hyperplane. The lifetime of $W=C^1$ is $\tau_{1,1}$, 
and at that time it is divided by $d_{1,1}$ into two cells denoted by 
$C^2$ and $C^3$. 

\medskip

\noindent $\bullet$ Now, for any cell $C^i$ which is 
generated in the course of the construction, the random sequences 
$(d_{i,m}: m\in \NN)$ and $\tau=(\tau_{i,m}: m\in \NN)$ 
are used, and the following rejection method is applied: 

\medskip

\noindent $\bullet$ When the time $\tau_{i,1}$ is elapsed, 
the random hyperplane $d_{i,1}$ 
is thrown onto the window $W$. If it does not intersect $C^i$ then this 
hyperplane is rejected, and we continue 
until a number $z_i$, which is the first index
$j$ for which a hyperplane $d_{i,j}$ intersects 
and thus divides $C^i$ into two cells $C^{l_1(i)}$, $C^{l_2(i)}$, 
that are called the successors of $C^i$.
Note that this random number $z_i$ is finite a.s. 
Hence the lifetime of $C^i$ is a sum 
$\tau^*(C^i):=\sum_{m=1}^{z_i} \tau_{i,m}$. It is easy to check, see 
\cite{nw}, that $\tau^*(C^i)$ is exponentially 
distributed with parameter $\Lambda ([C^i])$. 
Note that $z_1=1$ and $\tau^*(C^1)=\tau_{1,1}$.

\medskip

\noindent $\bullet$ This procedure is 
performed for any extant cell independently. 
It starts in that moment when the cell is born by division of 
a larger (the predecessor) cell. In order to guarantee independence 
of the division 
processes for the individual cells, the successors of $C^i$ get indexes 
$l_1(i)$, $l_2(i)$ in $\NN$ that are different, and that can be chosen as
the smallest numbers which were not yet used before for other cells. 

\medskip
 
\noindent $\bullet$ For each cell $C^i$ we denote by $\eta=\eta(i)$ the 
number of its ancestors and by \\
$(k_1(i), k_2(i),\ldots ,k_\eta(i))$ the sequence of
indexes of the ancestors of $C^i$. So
$W=C^{k_1(i)} \supset C^{k_2(i)} \supset \ldots \supset C^{k_\eta(i)} \supset C^i$.
Hence, $k_1(i)=1$
The cell $C^i$ is born at time 
$\bt(C^i)=\sum_{l=1}^\eta \tau^*(C^{k_l(i)})$ and it dies at time
$\ut(C^i)=\bt(C^i)+\tau^*(C^{i})$; for $C^1$ this is
$\bt(C^1)=0$ and $\ut(C^1)=\tau^*(C^1)$.
It is useful to put $k_{\eta+1}(i)=i$.

\medskip

With this notation at each time $t>0$ the tessellation $Y_t\land W$ is
constituted by the cells $C^i$ for which $\bt(C^i)\le t$ and $\ut(C^i)> t$. 
It is easy to see that at any time a.s. at most one cell 
dies and so a.s. at most only two cells are born.

\medskip

Now we describe the generated cells as intersections of half-spaces. 
First note that by translation invariance  follows $\Lambda([\{0\}])=0$.
Hence,  all the random hyperplanes 
$(d_{n,m}: n,m\in \NN)$ a.s. do not contain the point $0$. Now, for a 
hyperplane $H\in {\cal H}$ such that $0\not\in H$ 
we denote by $H^+$ and $H^-$ the closed half-spaces 
generated by $H$ with the convention $0\in \Int(H^+)$. 
Hence, $C^{k_{l+1}(i)}= C^{k_l(i)}\cap d_{k_l(i), z_{k_l(i)}}^\pm $, where the 
sign in the upper index determines on which side of the dividing hyperplane the 
cell $C^{k_{l+1}(i)}$ is located. Then any cell can be represented as an 
intersection of $W$ with half-spaces, 
\begin{equation} 
\label{cell} 
C^i = W\, \cap 
\, \bigcap_{l=1}^{\eta(i)} 
\bigcap _{m=1}^{z_{k_l(i)}} d_{k_l(i), m}^{s(k_l(i),m)} \, \cap \, 
\bigcap _{m=1}^{z_i-1} d_{i,m}^{s(i,m)}\,. 
\end{equation} 
In above relation we define the sign $s(j,m) \in \{+,-\}$ by  
the relation $C^j\subset d_{j,m}^{s(j,m)}$, for $j,m\in \NN$.
Notice, that the origin $0\in C^i$  if and only if all signs in the previous 
formula (\ref{cell}) satisfy $s(k_l(i),m)=+$ and $s(i,m)=+$.

\medskip

Obviously the set of cells can be organized as a dyadic tree, by the relation
"$C'$ is a successor of $C$". 
This method of construction is done in \cite{nw}.
For the following it is important to observe that all the 
rejected hyperplanes $d_{k_l(i), m}$ are also included in this intersection 
because the intersection with the appropriate half-spaces does not 
alter the cell. Although the third set 
$\bigcap_{m=1}^{z_{i}-1} d_{i, m}^{s(i,m)}$ in the intersection (\ref{cell}) does 
not modify the resulting set, we also include it 
because we will use this representation later.

\medskip

In \cite{nw} it was shown that there is no explosion, so 
at each time $t>0$ the number of cells
of $Y_t\land W$, denoted by $\xi_t$, is finite a.s.
Renumbering the cells, we write $\{C_t^i: i=1,...,\xi_t\}$ for the set of cells of 
$Y_t\land W$.

\subsection{ Independent increments relation}
\label{indincr}

The name STIT is an abbreviation for "stochastic stability under
the operation of
iteration of tessellations". Closely related to that stability is a certain
independence of increments of the STIT process in time.

\medskip

In order to explain the operation of iteration, for $T\in \T$
we number its cells in the following way.
Assign to each cell a reference point in its interior (e.g.
the Steiner point, see \cite{sw}, p. 613, or
another point that is a.s. uniquely defined). Order the set
of the reference points of all cells of $T$ by its
distance from the origin. For random homogeneous tessellations
this order is a.s. unique. Then number the cells of
$T$ according to this order, starting with
number 1 for the cell which contains the origin.
Thus we write $C(T)^1,C(T)^2,\ \ldots$ for the cells of $T$.

\medskip

For $T\in \T$ and
${\vec R}=(R^m: m\in \NN)\in \T^\NN$,
we define the tessellation $T\boxplus {\vec R}$, referred to
as the iteration of $T$ and ${\vec R}$, by
its set of cells
$$
T\boxplus {\vec R}\!=\!\{ C(T)^k\!\cap \!C({R}^k)^l: \,
k\!=\!1,...;\, l\!=\!1,...;\,
\Int(C(T)^k\!\cap \!C({R}^k)^l)\!\neq \!\emptyset \} .
$$
So, we restrict ${R}^k$ to the cell $C(T)^k$, and this is done for all
$k=1,\ldots $.
The same definition holds when the tessellation and the sequence of
tessellations are restricted to some window.

\medskip

To state the independence relation of the increments of the Markov
process $Y$ of STIT tessellations,
we fix a copy of the random process $Y$ and let
${\vec Y}'=({Y'}^m: m\in \NN)$ be a sequence of
independent copies of $Y$, all of them
being also independent of $Y$. In particular ${Y'}^m\sim Y$. For a fixed
time $s>0$, we set ${\vec Y}'_s=({Y_s'}^m: m\in \NN)$. Then, from the
construction and from the consistency property  of
$Y$ it is straightforward to see that the following property holds
\begin{equation}
\label{iterate}
Y_{t+s} \sim Y_t\boxplus {\vec Y}'_s \ \mbox{ for all }t,s>0\,.
\end{equation}
This relation was firstly stated in Lemma $2$ in \cite{nw}.
It implies $Y_{2t}\sim Y_t\boxplus {\vec Y}'_t$. 
The STIT property means that
\begin{equation}
\label{stit}
Y_{t} \sim 2(Y_t\boxplus {\vec Y}'_t) \ \mbox{ for all }t>0\,,
\end{equation}
so $Y_{t}\sim 2Y_{2t}$.
Here the multiplication with $2$ stands for the transformation
$x\mapsto 2x$, $x\in \RR^\ell$.

\section{ The space of tessellations and the tail $\sigma$-algebra}
\label{Sub1.1}

Let ${\cal C}$ be the set of all compact subsets of 
$\RR^\ell$. We endow $\T$ with the Borel $\sigma$-algebra ${\cal B}(\T)$
of the Fell topology (also known as the topology of closed convergence), 
namely 
$$ 
{\cal B}(\T)=\sigma \left( \{ \{ T\in 
\T :\, \partial T\cap C=\emptyset \} :\, C\in {\cal C} \} \right)\, . 
$$
(As usual, for a class of sets $\I$ we denote by $\sigma({\I})$ the smallest 
$\sigma-$algebra containing $\I$).
Let us fix $\bP$ a probability measure on $(\T, {\cal B}(\T))$.
All the sets are determined  mod$\, \bP$, that is up to a 
$\bP-$negligible set. So, for ${\cal E},\D\in {\cal B}(\T)$ we write ${\cal E}=\D$ mod$\, \bP$ 
if $\bP({\cal E}\Delta \D)=0$. Also for a pair ${\cal B}'$, ${\cal B}''$ of sub-$\sigma$-algebras 
of ${\cal B}(\T)$ we write ${\cal B}'\subseteq {\cal B}''$ mod$\, \bP$ if for all ${\cal E}\in {\cal B}'$
there exists $\D\in {\cal B}''$ such that ${\cal E}=\D$ mod$\, \bP$.

\medskip

For a window $W$ we introduce 
$$ 
{\cal B}(\T_{W})=\sigma \left( \{ \{ T\in \T :\, \partial T\cap C=
\emptyset \} :\, C\subseteq W,\, C\in {\cal C} \} \right) . 
$$ 
By definition ${\cal B}(\T_{W})\subset {\cal B}(\T)$ is a 
sub-$\sigma$-algebra. We
notice that if $W'\subseteq W$ then ${\cal B}(\T_{W'})\subseteq {\cal B}(\T_{W})$.

\medskip

Note that the set $\T\wedge W$ can be endowed with the $\sigma-$field \\
${\cal B}(\T\wedge W)=\sigma \left( \{ \{ T\in \T\wedge W:\, \partial T\cap C=
\emptyset \} :\, C\subseteq W\setminus \partial W,\, C\in {\cal C} \} \right)$.
For all ${\cal E}\in {\cal B}(\T_{W})$ we put ${\cal E}\wedge W=\{T\wedge W: T\in {\cal E}\}$,
which belongs to ${\cal B}(\T\wedge W)$. Denoting the law of the STIT process 
$Y$ by $\PP$ it holds
$$
\forall t>0, \forall \, {\cal E}\in {\cal B}(\T_{W}): \;\;\,
\PP(Y_t\in {\cal E})=\PP(Y_t\wedge W\in {\cal E}\wedge W).
$$
To avoid overburden notation and since there will be no confusion, 
instead of ${\cal E}\wedge W$ in the last formula 
we will only put ${\cal E}$, so it reads
\begin{equation}
\label{simplifnot1}
\forall t>0, \forall \, {\cal E}\in {\cal B}(\T_{W}): \;\;\,
\PP(Y_t\in {\cal E})=\PP(Y_t\wedge W\in {\cal E} )
\end{equation}

Let $(W_n: n\in \NN)$ be an increasing sequence of windows such that 
\begin{equation} 
\label{covandinc} 
\RR^\ell = \bigcup_{n\in \NN} W_n \, \hbox{ and } \, 
\forall \, n\in \NN, \; W_n\subset \Int \, W_{n+1}\,. 
\end{equation} 
We have 
$$ 
{\cal B}(\T_{W_n})\nearrow {\cal B}(\T ) \hbox{ as } n\nearrow \infty\, , 
$$ 
which means $\sigma\left(\bigcup_{n\in \NN} {\cal B}(\T_{W_n})\right)= {\cal B}(\T )$. On 
the other hand, it is easy to check that ${\cal B}(\T)={\cal B}(\T)^{\rm a}$, where 
\begin{equation} 
\label{condaprox} 
{\cal B}(\T)^{\rm a}=\{{\cal E} \in {\cal B}(\T ): \, \forall \epsilon>0, \, 
\exists n\in \NN, \, \exists {\cal E}_n\in {\cal B}(\T_{W_n}) \hbox{ such that } 
\bP({\cal E} \Delta {\cal E}_n)<\epsilon\}\,. 
\end{equation} 

In fact by definition we have ${\cal B}(\T)^{\rm a}\subseteq {\cal B}(\T)$ and
$\bigcup_{n\in \NN} {\cal B}(\T_{W_n})\subseteq {\cal B}(\T)^{\rm a}$. Because
 ${\cal B}(\T)^{\rm a}$ is a $\sigma$-algebra,
then necessarily ${\cal B}(\T)^{\rm a}={\cal B}(\T)$.

\medskip

In order to study the tail $\sigma$-algebra, we will also consider sets of 
tessellations which are determined by their behavior outside a window $W$, 
i.e. in its complement $W^c$. We define the $\sigma$-algebra
$$
{\cal B}(\T_{W^c})=\sigma \left( \{ \{ T\in \T :\, \partial T\cap C=
\emptyset  \} :\, C\subset W^c,\, C\in {\cal C}  \} \right) .
$$
We have ${\cal B}(\T_{W^c})\subset {\cal B}(\T )$. On the other hand, if
$W'\subseteq W$ then ${\cal B}(\T_{{W}^c})\subseteq {\cal B}(\T_{W'^c})$.

\medskip

Let $(W_n: n\in \NN)$ and $(W'_n: n\in \NN)$ be a pair of increasing
sequences of windows satisfying the conditions in (\ref{covandinc}).
Then $\forall n$ $,\, \exists \, m$ such that $W_n\subseteq W'_m$ and $\forall
m$, $\, \exists \,q$ such that $W'_m\subseteq W_q$. This gives
$$
{\cal B}(\T_{W_n^c})\subseteq {\cal B}(\T_{{W'_m}^c})\subseteq {\cal B}(\T_{W_q^c})\,.
$$
Hence
$\cap_{n=1}^\infty {\cal B}(\T_{W_n^c})= \cap_{n=1}^\infty  {\cal B}(\T_{{W'_n}^c})$. 
This equality allows us to define, in analogy with the 
definition done for point processes (see \cite{dvj}, Definition 12.3.IV), 
the tail $\sigma$-algebra on the space of tessellations.
 
\begin{definition}
The {\em tail $\sigma$-algebra} is defined as
${\cal B}_{-\infty}(\T)=\bigcap_{n=1}^\infty  {\cal B}(\T_{W_n^c})$,
where  $(W_n: n\in \NN)$ is an increasing sequence of windows such that for 
all $n\in \NN$, $W_n\subset \Int\, W_{n+1}$, and 
$\RR^\ell = \bigcup_{n\in \NN} W_n$. 
\end{definition}

Note that $(W_n=[-n,n]^\ell :n\in \NN)$ satisfies (\ref{covandinc}), so
it can be used in the above definition and also in the rest of
the paper.

\bigskip

\begin{lemma}
\label{fundtriv1}
Assume that for every window $W'$, all $\D\in{\cal B}(\T_{W'})$ 
and all $\epsilon>0$, there exists a window $\htW$ depending on 
$(W',\D,\epsilon)$, such that
\begin{equation}
\label{condtriv1}
W'\subset \Int\, \htW \; \hbox{ and } \, \forall \, {\cal E}\in{\cal B}(\T_{\htW^c})
\, :\quad \; |\bP(\D\cap {\cal E})-\bP(\D)\bP({\cal E})|<\epsilon.
\end{equation}
Then, the tail $\sigma$-algebra ${\cal B}_{-\infty}(\T)$ is $\bP-$trivial, that is 
\begin{equation}
\label{deftrivi2}
\forall {\cal E}\in {\cal B}_{-\infty}(\T)\,:\;\;\, \bP({\cal E})=0 \hbox{ or } \bP({\cal E})=1\, .
\end{equation}
\end{lemma}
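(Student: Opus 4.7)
The plan is the classical Kolmogorov-type argument: show $\bP(\mathcal{E}) = \bP(\mathcal{E})^2$ for every $\mathcal{E}\in{\cal B}_{-\infty}(\T)$, using the asymptotic independence hypothesis (\ref{condtriv1}) to couple $\mathcal{E}$ with an approximating event supported on a large but bounded window.

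First, fix $\mathcal{E}\in{\cal B}_{-\infty}(\T)$ and $\epsilon>0$. Because ${\cal B}(\T)^{\rm a}={\cal B}(\T)$, I can find some $W'=W_n$ in the distinguished sequence and an event $\D\in{\cal B}(\T_{W'})$ with $\bP(\mathcal{E}\Delta\D)<\epsilon$. Applying the hypothesis to the triple $(W',\D,\epsilon)$ furnishes a window $\htW$ with $W'\subset\Int\htW$ such that every $\mathcal{F}\in{\cal B}(\T_{\htW^c})$ satisfies $|\bP(\D\cap\mathcal{F})-\bP(\D)\bP(\mathcal{F})|<\epsilon$.

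The key observation is that I may take $\mathcal{F}=\mathcal{E}$ itself. Indeed $\htW$ is compact, hence contained in some $W_m$ of the exhausting sequence defining ${\cal B}_{-\infty}(\T)$; then $W_m^c\subseteq\htW^c$, so ${\cal B}(\T_{W_m^c})\subseteq{\cal B}(\T_{\htW^c})$, and therefore $\mathcal{E}\in{\cal B}_{-\infty}(\T)\subseteq{\cal B}(\T_{W_m^c})\subseteq{\cal B}(\T_{\htW^c})$. Plugging in gives $|\bP(\D\cap\mathcal{E})-\bP(\D)\bP(\mathcal{E})|<\epsilon$.

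Finally, assemble via a triangle inequality. The approximation $\bP(\mathcal{E}\Delta\D)<\epsilon$ yields both $|\bP(\D)-\bP(\mathcal{E})|<\epsilon$ and $|\bP(\D\cap\mathcal{E})-\bP(\mathcal{E}\cap\mathcal{E})|=|\bP(\D\cap\mathcal{E})-\bP(\mathcal{E})|<\epsilon$, and combining with the near-independence bound gives $|\bP(\mathcal{E})-\bP(\mathcal{E})^2|<3\epsilon$. Letting $\epsilon\downarrow 0$ forces $\bP(\mathcal{E})=\bP(\mathcal{E})^2$, so $\bP(\mathcal{E})\in\{0,1\}$. The only mildly delicate step is the remark that any candidate window $\htW$ supplied by the hypothesis sits inside some $W_m$, so that ${\cal B}_{-\infty}(\T)\subseteq{\cal B}(\T_{\htW^c})$ — this is what lets the approximating $\D$ in a bounded window be tested for near-independence against the tail event itself rather than against another bounded-window event. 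Everything else is a routine $\epsilon$-chase.
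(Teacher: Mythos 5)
Your proof is correct and follows essentially the same route as the paper's: approximate the tail event by an event in some ${\cal B}(\T_{W_k})$ via (\ref{condaprox}), invoke the hypothesis for that approximant, observe that the tail event itself lies in ${\cal B}(\T_{\htW^c})$ because $\htW$ sits inside some $W_m$, and finish with a $3\epsilon$ triangle inequality to get $\bP({\cal E})=\bP({\cal E})^2$. No gaps; only the roles of the symbols $\D$ and ${\cal E}$ are swapped relative to the paper.
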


\begin{proof}
Let $\D\in {\cal B}_{-\infty}(\T)$. Let $(W_n: n\in \NN)$ be an increasing sequence 
of windows satisfying (\ref{covandinc}). Let $\epsilon>0$ be fixed. Since  
$\D\in {\cal B}(\T)$, from (\ref{condaprox}) we get,
\begin{equation}
\label{condaprox1}
\exists k\,, \, \exists \D_k\in {\cal B}(\T_{W_k})
\hbox{ such that }\, \bP(\D\Delta \D_k)<\epsilon\,. 
\end{equation}
From hypothesis (\ref{condtriv1}) there exists
a window $\htW$, depending on $(W_k,\D_k,\epsilon)$, such that $W_k\subset \Int\, \htW$
and $\forall {\cal E}\in{\cal B}(\T_{\htW^c})$ it holds 
$|\bP(\D_k\cap {\cal E})-\bP(\D_k)\bP({\cal E})|<\epsilon$.
We know that $\exists \, n\ge k$ such that $\htW\subseteq W_n$. 
So, ${\cal B}(\T_{W_n^c})\subseteq {\cal B}(\T_{\htW^c})$. We then have
$$
\forall {\cal E}\in{\cal B}(\T_{W_n^c})\,:\quad \;
|\bP(\D_k\cap {\cal E})-\bP(\D_k)\bP({\cal E})|<\epsilon\,. 
$$
Since $\D\in {\cal B}_{-\infty}(\T)\subset {\cal B}(\T_{W_n^c})$
we get $|\bP(\D_k\cap \D)-\bP(\D_k)\bP(\D)|<\epsilon$. From (\ref{condaprox1})
we deduce $|\bP(\D\cap \D)-\bP(\D)\bP(\D)|<3\epsilon$. Since this occurs for all
$\epsilon>0$ we conclude $\bP(\D)=\bP(\D)\bP(\D)$, so $\bP(\D)=1$ or $0$. $\Box$
\end{proof}

\medskip

Let us discuss what happens when $\bP$ is translation invariant.
Let $h\in \RR^\ell$. For any set $D\subseteq \RR^\ell$ put 
$D+h=\{x+h: x\in D\}$ and for $T\in \T$ denote by $T+h$ the tessellation
whose boundary is $\partial(T+h)=\partial(T)+h$. For ${\cal E}\subseteq \T$ 
put ${\cal E}+h=\{T+h: T\in {\cal E}\}$. 
For all ${\cal E}\in{\cal B}(\T)$ we have ${\cal E}+h\in {\cal B}(\T)$ because 
$\{C\in {\cal C}\}=\{C+h: C\in {\cal C}\}$. The probability measure
$\bP$ is translation invariant if it satisfies
$\bP({\cal E})=\bP({\cal E}+h)$ for all ${\cal E}\in{\cal B}(\T)$ and $h\in \RR^\ell$. 

\medskip

A set ${\cal E}\in{\cal B}(\T)$ is said to be $(\bP-)$invariant, we put ${\cal E}\in \I(\T)$, 
if $\bP({\cal E} \Delta ({\cal E}+h))=0$ for all $h\in \RR^\ell$. Note that if
${\cal E}\in \I(\T)$, $\D\in {\cal B}(\T)$ and ${\cal E}=\D$ mod$\, \bP$, then $\D\in \I(\T)$.
It is easily checked that $\I(\T) \subseteq {\cal B}(\T)$ is a 
sub-$\sigma$-algebra, the {\em invariant $\sigma$-algebra} 
(see e.g. \cite{dvj}). We have the inclusion relation,
 \begin{equation} 
\label{incinv1} 
\I(\T)\subseteq {\cal B}_{-\infty}(\T)\, \hbox{mod} \bP\,. 
\end{equation} 
We note that (\ref{incinv1}) corresponds to propositions
in \cite{dvj} (pp. 206--210) for random measures.
For completeness we will prove (\ref{incinv1}). 
We denote ${\bf 1}=(1,...,1) \in \RR^\ell$,
so  $a{\bf 1}=(a,...,a)$ for $a\in \RR$.
Fix the sequence $(W_n=[-n,n]^\ell :n\in \NN)$. 
Note that if $m>2n$ then $W_n-m{\bf 1}\subset W_n^c$. 

\medskip

Let ${\cal E}\in \I(\T)$: For all $n\in \NN$, there exists $k=k(n)> n$ and 
${\cal E}_k\in {\cal B}(\T_{W_k})$ such that $\bP({\cal E}\Delta {\cal E}_k)<2^{-n}$. 
Since ${\cal E}_k\in \sigma(\{\{ T\in \T :\, \partial T\cap C=
\emptyset  \}: C\subseteq W_k, C\in {\cal C}\}$ we have that for 
$N>2k$, 
\begin{eqnarray*} 
&{}&  
{\cal E}_k\!-\!N {\bf 1}\  \in \  \sigma(\{\{ T\in \T :\, \partial T\cap C=
\emptyset  \}: C\subseteq W_k\!-\!N {\bf 1}, 
C\in {\cal C}\})\\
&{ \subseteq }& \sigma(\{\{ T\in \T :\, \partial T\cap C=
\emptyset  \}: C\subset W_{k}^c, C\in {\cal C}\}) \ = \
{\cal B}(\T_{W_{k}^c})\,. 
\end{eqnarray*}

Since $\bP({\cal E}\Delta ({\cal E}\!-\!N {\bf 1}))=0$ and 
$\bP(({\cal E}\!-\!N{\bf 1})\Delta ({\cal E}_k\!-\!N {\bf 1}))=\bP({\cal E}\Delta {\cal E}_k)< 2^{-n}$
 we get, 
$$ 
\forall N\!>\!2k:\;\; \bP({\cal E} \Delta ({\cal E}_k\!-\!N{\bf 1}))\le 
\bP({\cal E}\Delta ({\cal E}\!-\!N{\bf 1}))+\bP(({\cal E}\!-\!N{\bf 1})
\Delta ({\cal E}_k\!-\!N{\bf 1}))<2^{-n}. 
$$ 
We have $k>1$. Take $N=k^2$ and use $k^2>2k$ to get 
$$
\bP({\cal E}\Delta({\cal E}_k\!-\!k^2 {\bf 1}))\le 2^{-n}
\hbox{ and } {\cal E}_k\!-\!k^2{\bf 1}\in {\cal B}(\T_{W_{k}^c}). 
$$
Define $\D_m=\bigcup_{n> m}({\cal E}_{k(n)}\!-\!k(n)^2{\bf 1})$ for $m\ge 1$. 
This sequence of sets satisfies $\bP({\cal E}\Delta \D_m)\le 2^{-m}$, 
$\D_m\in {\cal B}(\T_{W^c_{k(m)}})$ and $\D_m$ decreases with $m$\,. 
We conclude that $\D=\bigcap_{m>1} \D_m$ satisfies $\bP({\cal E}\Delta \D)=0$ 
and $\D\in {\cal B}_{-\infty}(\T)$. Hence, relation (\ref{incinv1}) holds.

\medskip

So, when the tail $\sigma$-algebra ${\cal B}_{-\infty}(\T)$ is $\bP-$trivial, then
$\bP$ is ergodic with respect to translations, 
because every invariant set ${\cal E}\in \I(\T)$ also belongs to ${\cal B}_{-\infty}(\T)$
and so $\bP({\cal E})=0$ or $\bP({\cal E})=1$. We also have that if $\bP$ is 
translation invariant and ${\cal B}_{-\infty}(\T)$ is $\bP-$trivial, then
the action of translations is mixing. That is,
\begin{equation}
\label{prop11}
{\cal B}_{-\infty}(\T) \hbox{ is } \bP-\hbox{trivial }\Rightarrow \, 
\forall \, \D,  {\cal E}\in {\cal B}(\T)\,:\;
\lim\limits_{|h|\to \infty}\bP(\D\cap ({\cal E}+h))=\bP(\D)\bP({\cal E}).
\end{equation}
This result is shown for random measures in Proposition 12.3.V. in \cite{dvj}. But for completeness
let us prove it. Let $\D, \, {\cal E}\in {\cal B}(\T)$ and fix $\epsilon >0$. From 
(\ref{condaprox}) follows the existence of $k$ and ${\cal E}_k\in {\cal B}(\T_{W_k})$ such 
that $\bP({\cal E} \Delta {\cal E}_k)<\epsilon$. For every $h\in \RR^\ell$ we have 
$$
|\bP(\D\cap ({\cal E}+h)-\bP(\D\cap ({\cal E}_k+h)|<\epsilon .
$$ 
Our choice $W_n=[-n,n]^\ell$ implies that for all 
$h\in \RR^\ell$, $N\in \NN$ with $N>k$ and 
$|h|> (N+k) \sqrt{\ell}$ we have 
${\cal E}_k+ h \in {\cal B}(\T_{W_{N}^c})$, $W_k\cap W_N^c 
=\emptyset$, and so 
\begin{equation} 
\label{formcondN} \bP(\D\cap ({\cal E}_k+h))= 
\EE(\EE({\bf 1}_{{\cal E}_k+h}{\bf 1}_{\D}\, | \,{\cal B}(\T_{W_{N}^c}))= 
\EE({\bf 1}_{{\cal E}_k+h}\EE({\bf 1}_{\D} \, | \, {\cal B}(\T_{W_{N}^c})). 
\end{equation} 
The Decreasing Martingale Theorem (see e.g. \cite{par}) yields 
$$ 
\lim\limits_{N\to \infty} 
\EE({\bf 1}_{\D} | {\cal B}(\T_{W_{N}^c})= \EE({\bf 1}_{\D} \, | \, 
{\cal B}_{-\infty}(\T)) \hbox{ in } L^1(\bP)\,. 
$$ 
Since ${\cal B}_{-\infty}(\T)$ is assumed 
to be $\bP-$trivial we have $\EE({\bf 1}_{\D} \, | \, {\cal B}_{-\infty}(\T))=\bP(\D)$. 
So $\lim\limits_{N\to \infty} \EE({\bf 1}_{\D} | {\cal B}(\T_{W_{N}^c})=\bP(\D)$ in 
$L^1(\bP)$. Let $N$ be sufficiently large in order that $||\EE({\bf 1}_{\D} | 
{\cal B}(\T_{W_{N}^c})-\bP(\D)||_{1} <\epsilon$. Then for $|h|> (N+k) \sqrt{\ell}$ 
we can use (\ref{formcondN}) to obtain, 
$$
|\bP(\D\cap ({\cal E}_k+h))-\bP(\D)\bP({\cal E}_k+h)|\le 
||\EE({\bf 1}_{\D} | {\cal B}(\T_{W_{N}^c})-\bP(\D)||_{1}<\epsilon\,.
$$
Since $\bP({\cal E}_k+h)=\bP({\cal E}_k)$ we deduce
$\bP(\D\cap ({\cal E}_k+h))\to \bP(\D)\bP({\cal E}_k)$ as $h\to \infty$.
We conclude $\bP(\D\cap ({\cal E}+h))\to \bP(\D)\bP({\cal E})$ 
as $h\to \infty$, so mixing is shown.

\section{Main results}
\label{trivitail}

As already defined, $Y=(Y_t:\, t>0)$ denotes a STIT tessellation
process, defined by the measure $\Lambda$ on $\hH$ in $\RR^\ell$ 
which satisfies the properties described in Section \ref{constr}.

\begin{theorem}
\label{alphacoeff} 
Let $W'$ be a window. Then  
\begin{eqnarray*} 
&{}& \forall t>0,\, \forall \varepsilon >0,\,  
\exists \hbox{ a window } \htW \hbox{ such that } 
W'\subset \Int \htW,\, \hbox{ and  }\\ 
&{}& 
\forall \D\in {\cal B}(\T_{W'}) ,\ \forall {\cal E}\in {\cal B}(\T_{\, \htW^c})\,:\;\;
|\PP (Y_t \!\in \! \D \!\cap \! {\cal E}) - \PP (Y_t \!\in \! \D) 
\PP (Y_t \!\in \! {\cal E})| < \varepsilon \,. 
\end{eqnarray*}
\end{theorem}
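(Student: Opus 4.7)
My plan has two stages: first, choose $\htW$ so that no cell of $Y_t$ straddles $W'$ and $\htW^c$ with probability at least $1-\varepsilon$; second, on that good event, decouple $Y_t\wedge W'$ from $Y_t\cap \htW^c$ via an explicit coupling. Throughout I would work in a very large auxiliary enclosing window $W\supset \htW$ and rely on the key feature of the construction of Section \ref{constr}: the triggers $(d_{i,m},\tau_{i,m})_{m\in\NN}$ attached to each cell $C^i$ are independent across cells, so once a cell is divided its two descendant subtrees evolve through disjoint independent random sources.

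\emph{Step 1: localisation.} Set
$$A := \{\text{no cell of } Y_t \text{ intersects both } W' \text{ and } \htW^c\}.$$
The spanning hypothesis on $\Lambda$ makes the cells of $Y_t$ a.s.\ bounded, so the random union of cells of $Y_t$ meeting the compact set $W'$ is a.s.\ bounded; hence $\PP(A)\to 1$ as $\htW\uparrow \RR^\ell$, and we may choose $\htW$ with $\PP(A^c)<\varepsilon$. Quantitatively, for any $x,y\in\RR^\ell$ the first time $x$ and $y$ lie in distinct cells of the STIT process is exponentially distributed with rate $\Lambda(\{H\in\hH:\, H\cap \overline{xy}\neq\emptyset\})$, which by translation invariance of $\Lambda$ grows linearly in $|x-y|$. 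A covering argument over finitely many representative pairs $(x,y)\in W'\times \htW^c$ then yields $\PP(A^c)<\varepsilon$ once the distance from $W'$ to $\htW^c$ is large enough (depending on $W',t,\varepsilon$).

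\emph{Step 2: decoupling and conclusion.} Let $\tT_1$ denote the subtree of cells of the construction that ever contain a point of $W'$, and $\tT_2$ the corresponding subtree for $\htW^c\cap W$. These two subtrees share a common prefix (ancestors that contain points of both regions) and branch apart thereafter. On $A$, by time $t$ the branching has occurred completely: the cells of $Y_t$ belonging to $\tT_1$ are disjoint from those belonging to $\tT_2$, and each collection is driven by a family of post-separation triggers that are independent of those of the other branch. I would then construct two independent copies $\widetilde Y^{(1)}_t,\widetilde Y^{(2)}_t \sim Y_t$ by reusing the post-separation triggers of $\tT_1$ for $\widetilde Y^{(1)}_t$ and those of $\tT_2$ for $\widetilde Y^{(2)}_t$, replacing the shared-prefix triggers by a fresh independent source for each copy. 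This yields
$$\widetilde Y^{(1)}_t\wedge W' = Y_t\wedge W' \ \text{ and }\ \widetilde Y^{(2)}_t\cap \htW^c = Y_t\cap\htW^c \ \text{ on } A,$$
so for any $\D\in {\cal B}(\T_{W'})$ and ${\cal E}\in {\cal B}(\T_{\htW^c})$ the standard coupling bound gives
$$|\PP(Y_t\in\D\cap{\cal E}) - \PP(Y_t\in\D)\PP(Y_t\in{\cal E})| \le \PP(A^c) < \varepsilon,$$
using independence of the $\widetilde Y^{(i)}_t$ and $\widetilde Y^{(i)}_t\sim Y_t$.

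\emph{Main obstacle.} The subtle point is the coupling in Step 2: the event $A$ does not by itself erase the correlations carried by the shared-prefix triggers, since these determine which side of each early cut is occupied by $W'$ and by $\htW^c$. The technical heart of the proof, I expect, is a careful bookkeeping argument using the cell-labelling scheme of Section \ref{constr} showing that, on $A$, every hyperplane piece produced by a shared-prefix trigger that still contributes to $\partial Y_t$ at time $t$ lies in a cell belonging to neither $\tT_1$ nor $\tT_2$; equivalently, that the post-separation triggers of each branch together with an arbitrary (fresh or real) shared-prefix sequence recover the same joint law of $(Y_t\wedge W',\,Y_t\cap\htW^c)$ on $A$. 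Establishing this last point rigorously is what makes the re-sampling legitimate and is where I expect the main work to go.
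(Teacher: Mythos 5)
There is a genuine gap in Step 2, and it sits exactly where you locate your ``main obstacle'': the bookkeeping claim you hope for is false, and without it the resampling coupling does not exist. In the STIT construction, once a cell $C$ is divided by a hyperplane $d$, the facet $C\cap d$ persists in $\partial Y_s$ for all later $s$ (it only gets refined). Now take a shared-prefix cell $C$ (one still meeting both $W'$ and $\htW^c$) and suppose the dividing hyperplane $d$ intersects $W'$. Then $W'$ has points on both sides of $d$, so both successors of $C$ meet $W'$ and belong to your $\tT_1$, and the trace $d\cap W'$ is a permanent part of $\partial Y_t\cap W'$ bordered by cells of $\tT_1$. Since $d$ is a full hyperplane cutting across the large cell $C$, it will in general also leave a permanent trace in $\htW^c$. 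Your event $A$ (no cell of $Y_t$ straddles $W'$ and $\htW^c$ at time $t$) in no way excludes this: the straddling is resolved by later cuts, but the two restrictions $Y_t\wedge W'$ and $Y_t\cap\htW^c$ still contain pieces of the \emph{same} random hyperplane $d$. Hence $Y_t\wedge W'$ is not a function of post-separation triggers alone, replacing the shared-prefix triggers by fresh ones changes $Y_t\wedge W'$ on $A$, and the identity $\widetilde Y^{(1)}_t\wedge W'=Y_t\wedge W'$ on $A$ fails. (A secondary issue: even if a coupling on $A$ existed, you would still need the resampled processes to have the correct marginal law and to be independent of the event $A$ itself, which is measurable with respect to the full construction; your ``standard coupling bound'' silently assumes both.)

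The paper's proof supplies precisely the missing idea. Instead of the event $A$, it conditions on \emph{encapsulation}: there is a (small) time $s$ at which the zero-cell $C_s$ of $Y\wedge W$ satisfies $W'\subseteq C_s\subset\Int(W)$, i.e.\ $W'$ has not yet been hit by \emph{any} hyperplane and is already shielded from $W^c$ by the boundary of $C_s$. This guarantees that no hyperplane facet ever meets both $W'$ and $W^c$: before time $\aS(W',W)$ nothing touches $W'$, and afterwards every hyperplane reaching $W'$ is confined inside a descendant of $C_{\aS}\subset\Int(W)$. Combined with the independent-increments relation (\ref{iterate}), this gives \emph{exact} conditional independence of $\{Y_t\in\D\}$ and $\{Y_t\in{\cal E}\}$ given $\aS=s$ (equation (\ref{condind1})), with the probability of encapsulation by a small time $t_2$ made close to $1$ by enlarging the window (Lemmas \ref{lemma_encapsprob1}--\ref{goodWeps}). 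The price is that conditioning on $\{\aS<t_2\}$ shifts the law of $Y_t\wedge W'$ to that of $Y_{t-s}\wedge W'$; the paper controls this perturbation by showing $Y\wedge W'$ has no jump in $[t-t_2,t)$ with high probability (Lemmas \ref{nojump}--\ref{eventsmallwindow}, Proposition \ref{corevents}), a step entirely absent from your argument. To repair your proof you would essentially have to reintroduce the encapsulation event and this time-shift control, i.e.\ reproduce the paper's route.
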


\medskip

Denote by ${\bP}^t$ the marginal law of
$Y_t$ in $\T$, that is 
$$
\forall \, {\cal E}\in {\cal B}(\T)\,:\;\;\, {\bP}^t({\cal E})=\PP(Y_t\in {\cal E})\,.
$$
We say that at time $t>0$ the tail $\sigma$-algebra ${\cal B}_{-\infty}(\T)$ is trivial
for the STIT process $Y$ if  ${\bP}^t$ is trivial.

\medskip

Theorem \ref{alphacoeff} implies that ${\bP}^t$ 
satisfies the sufficient condition 
(\ref{condtriv1}), which by Lemma \ref{fundtriv1} implies 
the triviality of the tail $\sigma$-algebra ${\cal B}_{-\infty}(\T)$. 
Hence, the following result holds.

\begin{theorem}
\label{trivtail}
For all $t>0$ the tail $\sigma$-algebra is trivial for the STIT process $Y$,
that is $\forall \, {\cal E}\in {\cal B}_{-\infty}(\T)$ we have 
$\PP(Y_t\in {\cal E})=0$ or $\PP(Y_t\in {\cal E})=1$.
\end{theorem}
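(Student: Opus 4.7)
The plan is to deduce Theorem \ref{trivtail} as an immediate corollary of Theorem \ref{alphacoeff} combined with Lemma \ref{fundtriv1}, applied to the marginal law $\bP^t$ of $Y_t$ on $(\T, {\cal B}(\T))$.

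First I would observe that Theorem \ref{alphacoeff} is in fact a \emph{uniform} strengthening of the hypothesis (\ref{condtriv1}) needed in Lemma \ref{fundtriv1}. Indeed, Theorem \ref{alphacoeff} asserts the existence, for each window $W'$ and each $\varepsilon>0$, of a single window $\htW$ (depending only on $W',t,\varepsilon$) such that the decorrelation estimate $|\bP^t(\D\cap{\cal E})-\bP^t(\D)\,\bP^t({\cal E})|<\varepsilon$ holds \emph{simultaneously} for every $\D\in{\cal B}(\T_{W'})$ and every ${\cal E}\in{\cal B}(\T_{\htW^c})$. In particular, it holds in the weaker form required by (\ref{condtriv1}), where $\htW$ is merely allowed to depend on the triple $(W',\D,\epsilon)$. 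Hence condition (\ref{condtriv1}) is satisfied with $\bP=\bP^t$.

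Applying Lemma \ref{fundtriv1} to $\bP=\bP^t$ then yields directly that ${\cal B}_{-\infty}(\T)$ is $\bP^t$-trivial, i.e.\ for every ${\cal E}\in{\cal B}_{-\infty}(\T)$ we have $\PP(Y_t\in{\cal E})=\bP^t({\cal E})\in\{0,1\}$, which is the conclusion. At this stage of the paper there is no genuine obstacle: the proof of Theorem \ref{trivtail} is essentially a two-line invocation of the two preceding results. All the real work is deferred to Theorem \ref{alphacoeff}, whose proof in Section \ref{proofs} will have to exploit the explicit cell representation (\ref{cell}) together with the independent-increments iteration relation (\ref{iterate}) in order to exhibit, for each $W'$ and $\varepsilon$, a window $\htW$ large enough that the STIT events supported inside $W'$ and inside $\htW^c$ decouple uniformly under $\bP^t$.
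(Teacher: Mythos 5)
Your proposal is correct and follows exactly the route the paper takes: the paper itself derives Theorem \ref{trivtail} by noting that Theorem \ref{alphacoeff} verifies the sufficient condition (\ref{condtriv1}) for $\bP^t$, so Lemma \ref{fundtriv1} gives triviality. Your observation that Theorem \ref{alphacoeff} is a uniform strengthening of (\ref{condtriv1}) is accurate and matches the paper's (implicit) reasoning.
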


Relation (\ref{prop11}) ensures that Theorem \ref{trivtail} is stronger than
the mixing property shown by Lachi{\`e}ze-Rey \cite{lr}.

\section{Proof of Theorem \ref{alphacoeff}} 
\label{proofs}

Let $W'$ be a window. Since the measure $\Lambda$ is supposed to 
be translation invariant, without loss
of generality it can be assumed that the
origin $0\in \Int(W')$.

\medskip

Let $W$ be a window such that $W'\subset \Int(W)$. A key idea 
is the investigation of the probability that the 
window $W'$ and the complement $W^c$ are 
separated by the STIT process $Y\land W$ (we say $W'$ encapsulated within $W$) 
and thus the construction inside $W'$ 
and outside $W$, respectively, are approximately independent.

\medskip

For simplicity, denote by $C_t=C^1_t$ the (a.s. uniquely determined)
cell of tessellation $Y_t\land W$ that
contains the origin in its interior. Obviously, $C_0 =W$. 
Note that $C_t$ decreases as time $t$ increases. On the other
hand since $W'\subset W$, when we consider the STIT on $W'$
we can take $(Y_t\land W)\land W'$. 

\begin{definition}
\label{encaps}
Let be $W',\ W$ be two windows with $0\in W'\subset \Int(W)$ and let $t>0$. 
We say that {\em $W'$ is encapsulated inside $W$ at time $t$} 
if the cell $C_t$ that contains $0$ in $Y_t\land W$ is such that,
\begin{equation}
\label{def_encaps}
W' \subseteq C_t\subset \Int(W)\,.
\end{equation}
We write $W' |_t W$ if $W'$ is encapsulated inside $W$ at time $t$. 
\end{definition}

We denote the {\em encapsulation time} by
$$
{\aS}(W',W) =\inf \{ t>0 :\, W'|_t W   \}\,,
$$
where as usual we put ${\aS}(W',W)=\infty$ when $\{t>0 :\, W'|_t W\}=\emptyset$. 
Encapsulation of $W'$ inside $W$ means that ${\aS}(W',W)<\infty$
or equivalently $W' |_t W$ for some $t>0$. That is, where 
the boundaries $\partial W'$ and $\partial W$ 
are completely separated by facets of the 0-cell 
before the smaller window $W'$ is hit for the first time 
by a facet of the STIT tessellation.

\medskip

We have $\{{\aS}(W',W)\le t\}\in \sigma(Y_s:s\le t)$, so it is a stopping time.
Hence the variable ${\aS}(W',W)$ is also a stopping time for the processes 
$Y\wedge W$. 
On the other hand notice that the distribution of ${\aS}(W',W)$ 
does neither depend on the method of construction of the STIT process $Y$, nor 
on the window $\widetilde W$ where the construction is performed as long as 
$W\subseteq {\widetilde W}$. In several proofs of the results we will assume 
that the starting process is $Y$, but in some others ones 
we will start from the STIT process $Y\wedge W$, 
as it occurs in Lemma \ref{lemma_encapsprob1}.

\medskip

Note that even if in the STIT construction we have "independence after separation", 
it has to be considered that the tessellation outside $W$ also depends on 
the process until the separation time.

\medskip

For two Borel sets $A,B\subset \RR ^\ell$ we denote by 
$$
[A|B]=\{H\!\in \!{\hH}: \left(A\!\subset \!\Int(H^+) 
\!\land \!B \!\subset \!\Int(H^-)\right) 
\lor \left((A\!\subset \!\Int(H^-) \!\land \!B\!\subset \!\Int(H^+)\right)\} ,
$$ 
the set of all hyperplanes that separate $A$ and $B$. This set is 
a Borel set in $\hH$.

\medskip

For a window $W$ (which is defined to be a convex polytope) we denote by 
$\{f^W_a: \, a=1,\ldots,q\}$ the $(\ell-1)$-dimensional facets of $W$. Let $W'$ 
be another window such that $W'\subset \Int(W)$. We denote by $$ G_a=[W'|f^W_a], 
\, a=1,\ldots,q\,; $$ the set of hyperplanes that separate $W'$ from the facet 
$f^W_a$ of $W$. Note that all these sets are non-empty, and they are not 
necessarily pairwise disjoint.

\medskip

There exists a finite family $\{G'_a: a=1,\ldots, q\}$  of pairwise disjoint 
nonempty measurable sets that satisfy
\begin{equation}
\label{partition}
\forall a\in \{ 1,\ldots ,q\} ,\, 
G'_a\subseteq G_a. 
\end{equation}

(E.g. we can choose $G'_a=G_a \setminus \bigcup_{b< a}G_b$, 
or we can partition $\bigcup_{a=1}^q G_a$ alternatively.)

\medskip

\begin{lemma}
\label{lemma_encapsprob1}
Let $W'$, $W$ be two compact convex polytopes, with $W'\subset \Int(W)$. 
Let $\{G'_a: a=1,\ldots, q\}$ be a finite class of nonempty disjoint measurable sets 
satisfying (\ref{partition}) and such that
$\Lambda (G'_a)>0$ for all $a= 1,\ldots ,q$. Then,
\begin{eqnarray}
\nonumber
\PP ({\aS}(W' , W)\leq t )\geq &{}&
{\rm e}^{-t\Lambda ([W'])} \, \prod_{a=1}^{q} 
\left(1-{\rm e}^{-t\Lambda (G'_a)} \right) \\
\label{encapsprob} 
&+&
\int_0^t \Lambda ([W']) {\rm e}^{-x \Lambda ([W'])}  
\prod_{a=1}^{q} \left(1- {\rm e}^{-x \Lambda (G'_a)} \right){\rm d} x \,. 
\end{eqnarray}
\end{lemma}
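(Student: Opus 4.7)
The plan is to couple the STIT dynamics in $W$ with a single Poisson point process on $\hH\times\RR_+$, extract $q+1$ independent exponential clocks from it, and show that a simple event on those clocks forces encapsulation by time $t$. The probability of that event will be exactly the right-hand side of (\ref{encapsprob}).

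First I would argue that the chain $C^{(0)}=W\supset C^{(1)}\supset\cdots$ of cells containing the origin can be realized through a single Poisson process $\Phi$ on $[W]\times\RR_+$ of intensity $\Lambda\otimes dt$: with $\sigma_0=0$, define the $(k{+}1)$-st cut time $\sigma_{k+1}$ as the first time after $\sigma_k$ at which $\Phi$ has a point in $[C^{(k)}]$, and take that point as the cutting hyperplane. The memoryless property of $\Phi$ and the Markov structure of the cell-by-cell construction of \cite{nw} show that $(C^{(k)},\sigma_k)_{k\ge 0}$ has the correct distribution. The sets $[W']$ and $G'_1,\ldots,G'_q$ are pairwise disjoint subsets of $[W]$ (a hyperplane in $G'_a\subseteq[W'|f^W_a]$ strictly separates $W'$ from $f^W_a$ and so misses $W'$), so the first $\Phi$-arrivals in these sets are mutually independent with $T_0\sim\mathrm{Exp}(\Lambda([W']))$ and $T_a\sim\mathrm{Exp}(\Lambda(G'_a))$; the assumption $\Lambda(G'_a)>0$ ensures that each $T_a$ is a.s.\ finite.

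The core claim to establish is that on the event $E=\{\max_a T_a\le\min(T_0,t)\}$ one has $\aS(W',W)\le t$. Since $W'\subseteq C^{(k)}$ implies $[W']\subseteq[C^{(k)}]$, the absence of any $[W']$-arrival of $\Phi$ in $[0,\max_a T_a]$ (guaranteed by $T_0>\max_a T_a$) forbids any $[W']$-cut in that interval, so $W'\subseteq C(\max_a T_a)$. For each facet $a$, let $k$ be the index with $T_a\in[\sigma_k,\sigma_{k+1})$, so that $C^{(k)}$ is the cell containing the origin just before $T_a$. Either the arriving hyperplane $H\in G'_a$ lies in $[C^{(k)}]$, in which case necessarily $T_a=\sigma_{k+1}$ by minimality in the construction of $\Phi$, and the cut places the origin in $C^{(k)}\cap H^+\subset\Int(H^+)$ (the $W'$-side), disjoint from $f^W_a\subset\Int(H^-)$; or $H\notin[C^{(k)}]$, in which case $C^{(k)}$ sits entirely on the $W'$-side of $H$ (since $W'\subseteq C^{(k)}$ is on that side), so $C^{(k)}\cap f^W_a=\emptyset$ already. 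In either case the cell containing the origin is disjoint from $f^W_a$ by time $T_a$, and this property is preserved thereafter because cells only shrink. Therefore at time $\max_a T_a$ the cell around the origin is disjoint from every facet of $W$, giving $C(\max_a T_a)\subseteq\Int(W)$ together with $W'\subseteq C(\max_a T_a)$, i.e.\ $\aS\le\max_a T_a\le t$.

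Finally, conditioning on $T_0$ and splitting into $\{T_0>t\}$ and $\{T_0\le t\}$ yields by independence
\[
\PP(E)=e^{-t\Lambda([W'])}\prod_{a=1}^q(1-e^{-t\Lambda(G'_a)})+\int_0^t\Lambda([W'])e^{-x\Lambda([W'])}\prod_{a=1}^q(1-e^{-x\Lambda(G'_a)})\,dx,
\]
which is exactly the claimed lower bound. The main obstacle is the geometric dichotomy used in the third paragraph: checking that the first $G'_a$-arrival either is forced to be the next cut of the current cell or can only happen after that facet has been separated off. The single-process coupling of the first paragraph is what makes this clean; once it is in place, the independence and the explicit computation of $\PP(E)$ are routine.
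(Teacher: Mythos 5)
Your proof is correct and follows essentially the same route as the paper: the same Poisson ``rain of hyperplanes'' coupling for the origin cell, the same $q+1$ independent exponential clocks from the pairwise disjoint sets $[W'],G'_1,\ldots,G'_q$, the same sufficient event $\{\max_a T_a\le\min(T_0,t)\}$, and the same two-term computation of its probability. The only difference is cosmetic: where you run a two-case analysis (arriving hyperplane hits or misses the current cell), the paper short-circuits this by representing the origin cell as $W\cap\bigcap_{m:S_m<t}d_m^+$, i.e.\ intersecting with the positive half-spaces of \emph{all} thrown hyperplanes including the rejected ones, so that $C_{\M}\subseteq d_{(a)}^+$ is immediate; your parenthetical $C^{(k)}\cap H^+\subset\Int(H^+)$ is not literally true, but the disjointness from $f^W_a\subset\Int(H^-)$ that you actually use is.
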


\begin{proof}
Our starting point in the proof is the construction of
the process $Y\wedge W$.
Because we assume $0\in W'$ we focus on the genesis of the 
0-cell $C_t=C_t^1$, $t\geq 0$, only. We use the representation (\ref{cell}), 
and we emphasize that in this intersection also participate
the positive half-spaces of those hyperplanes $d_{i, m}$ 
which are rejected in the construction.

\medskip

Now, we consider a Poisson point process 
$$
\Phi = \{ (d_{ m}, S_{ m}):\,  m \in \NN\}
$$ 
on $[W] \times [0,\infty )$ 
with the intensity measure $\Lambda^W\otimes  \Lambda ({[W]}) \, \lambda_+$, 
where $\lambda_+$ denotes the Lebesgue measure 
on $\RR_+ =[0,\infty )$.
This point process can be considered as a marked hyperplane process where the marks 
are birth-times (or as a "rain of hyperplanes"). 
This choice of the intensity measure corresponds to the families 
$D$ and $\tau$ of random variables in Subsection \ref{constr}: 
the interval between two sequential births of hyperplanes is 
exponentially distributed with parameter $\Lambda ({[W]})$, 
and the law of the born hyperplanes is $\Lambda^W$. 
Thus the $S_{m}$ are sums of i.i.d. exponentially 
distributed random variables which are independent of the $d_{ m}$. 
This corresponds to one of the standard methods to construct  (marked) 
Poisson point processes  (cf. e.g. \cite{king}).
Note that this Poisson process is used for the construction of the 0-cell exclusively.

\medskip

Let $\eta$ denote the number of ancestors of $C_t$ with indexes 
${k_1},\ldots ,{k_\eta}$, and $Z_{k_l}=\sum_{m=1}^l z_{k_l}$, 
$l=1,\ldots , \eta +1$. Thus we 
can write formula (\ref{cell}) for the 0-cell as
\begin{equation} 
\label{zerocell} 
C_t = W\,  \cap 
\, \bigcap_{l=1}^{\eta} \ \,  
\bigcap _{m=Z_{k_{l-1}}+1}^{Z_{k_l}} d_{ m}^+ \, \cap \, 
\bigcap _{m=Z_{k_{\eta}}+1}^{Z_{k_{\eta +1}}-1} d_{m}^+\,
= W\, \cap \, \bigcap_{m:S_m <t} d_m^+ \, .
\end{equation}

Define the random times 
\begin{eqnarray*}
\sigma '&=&\min \{ S: \exists (d,S)\in \Phi :d\in [W'] \}  \, \hbox{ and }\\
\sigma_a&=&\min \{ S: \exists (d,S)\in \Phi :d\in G'_a \}, \,a=1,\ldots q .
\end{eqnarray*}
These are the first times that a hyperplane of $\Phi$ falls into 
the respective sets. Note that these minima exist 
and are greater than $0$ because all $\Lambda_{[W]}(G'_a)>0$, and we are working on 
a bounded window $W$ and $\Lambda$ is assumed to be locally finite, 
so $\Lambda^W$ is a probability measure. Let
$$
\M=\max \{ \sigma_a:\, a=1,\ldots q\} \,.
$$
By definition for all $a=1,\ldots , q$ there exists a
$(d_{(a)}, S_{(a)})\in \Phi$ with $d_{(a)}\in G'_a$ and $S_{(a)}\leq \M$.
Then, $f^W_{a}\subset d_{(a)}^-$ and $C_{S_{(a)}}\subseteq d_{(a)}^+$.
Since $C_{\M}\subseteq C_{S_{(a)}}$ we deduce
\begin{equation}
\label{suffcell1}
C_{\M} \subseteq  \bigcap_{a=1}^{q}  d_{(a)}^+ \subset \Int(W)\,.
\end{equation}
On the other hand, if $\sigma' \geq \M $ then $W'$ is not intersected
until the time $\M=\max \{\sigma_a:\, a=1,\ldots , q\}$, so we have
$W'\subseteq C_{\M}$. Then
$$
W'\subseteq C_{\M} \subset \Int(W)\,.
$$
We have shown
$$
\left\{ \M \leq \sigma' \right\} \,
\subseteq \, \left\{ {\aS}(W' , W)\leq \M  \right\}\,.
$$
This relation implies straightforwardly the inclusion,
\begin{equation}
\label{inclevent1}
 \left\{ \M \leq \min \{\sigma', t \} \right\} \,
\subseteq \, \left\{ {\aS}(W' , W)\leq t  \right\}\,.
\end{equation}
Indeed, from $\M \leq \sigma'$ we get
${\aS}(W' , W)\leq \M$, and we use $\M\le t$ to get relation (\ref{inclevent1}).
We deduce
\begin{equation}
\label{bound1}
\PP ({\aS}(W' , W)\leq t)\geq \PP (\M \leq \min \{ \sigma' ,t \} )\,.
\end{equation}

Now, the sets $[W']$, $G'_a$, $a=1,\ldots , q$, are pairwise disjoint 
and therefore the restricted Poisson point processes 
$\Phi \cap ([W'] \times [0,\infty ) )$,  
$\Phi \cap (G'_a\times [0,\infty ) )$, $a=1,\ldots , q$, 
are independent. Hence $\sigma'$, $\sigma_a,\, a=1,\ldots ,q$,
are independent random variables. Then $\sigma'$ and $\M$ are independent
and we get
\begin{eqnarray*}
\PP (\M \leq \min \{ \sigma' , t \} )
&=& \PP (\M \leq \sigma ' \leq t ) + \PP (\M \leq t \leq \sigma ' )\\
&=& \PP (\M \leq \sigma ' \leq t ) + \PP (\M \leq t)
\PP( t \leq \sigma')\,.
\end{eqnarray*}
Since $\sigma'$, $\sigma_a,\, a=1,\ldots ,q$, 
are exponentially distributed 
with the respective parameters $\Lambda ([W'])$, $\Lambda (G'_a)>0$, 
$a=1,\ldots , q$, we find
$$
\PP( t \leq \sigma ' )= {\rm e}^{-t\Lambda ([W'])} \, \hbox{ and } \,
\PP (\M \leq t)= \prod_{a=1}^{q} \left(1- {\rm e}^{-t\Lambda (G'_a)} \right) . 
$$
Now, by denoting the density functions of $\sigma'$ by $p'$ and 
those ones of $\sigma_{i}$ by $p_{i}$ respectively, we get
\begin{eqnarray*}
\PP (\M \leq \sigma ' \leq t ) 
&= & \int_0^t p'(x) \left( \int_0^x p_{1}(x_{1}) {\rm d} x_{1} \ldots 
   \int_0^x p_{q}(x_{q}) {\rm d} x_{q}     \right) {\rm d} x \\    
&= & \int_0^t \Lambda ([W'])  {\rm e}^{-x \Lambda ([W'])}  
\prod_{a=1}^{q} \left(1-{\rm e}^{-x \Lambda (G'_a)} \right){\rm d} x \,.
\end{eqnarray*}
Therefore, formula (\ref{encapsprob}) follows. $\Box$
\end{proof}

\medskip

\begin{remark}
$(i)$ We emphasize that $\M \leq \min \{ \sigma ', t \}$ is 
sufficient but not necessary for ${\aS}(W' , W)\leq t $. There can 
also be other ways to encapsulate $W'$ within $W$ than separating 
the complete facets of $W$ by single hyperplanes. Alternative geometric 
constructions are possible. 

\medskip

\noindent $(ii)$ It is well known in convex geometry, 
that $[W'|f^W_a]\not= \emptyset$ for all $a=1,\ldots ,q$. 
But depending on the support of the measure $\Lambda$ (in particular, if 
$\Lambda$ is concentrated on a set of hyperplanes with only finitely many 
directions) there can be windows $W$ such that $\Lambda ([W'|f^W_a])=0$ 
for some $a$.
In such cases the bound given in Lemma 
\ref{lemma_encapsprob1} is useless. Therefore, in the following, $W$ will be 
adapted to $\Lambda$ in order to have all $\Lambda (G'_a)>0$. But here we will 
not try to find an optimal $W$ in the sense that the quantities 
$\Lambda (G'_a)$ could be somehow maximized.

\medskip

\noindent $(iii)$ As an example consider the particular measure $\Lambda_\bot = 
\sum_{c=1}^\ell g_c \delta_c$, with $g_c>0$, $\delta_c$ the translation 
invariant measure on the set of all hyperplanes that are orthogonal to the $c$-th 
coordinate axis in $\RR^\ell$, with the normalization $\delta_c ([s_c])=1$, 
where $s_c$ is a linear segment of length $1$ and 
parallel to the $c$-th coordinate axis. 
Let  $W'=[-\alpha,\alpha]^\ell$, $W=[-\beta,\beta]^\ell$ be two windows
with $0<\alpha<\beta$. 
Then we can choose the sets $G'_a=G_a$ 
if the facet $f^W_a$ of $W$ is orthogonal to the 
$c$-th coordinate axis, $a=1, \ldots , 2 \ell$. 
We have $\Lambda_\bot (G'_a)= g_c(\beta-\alpha)$. Simple geometric 
considerations yield,  
\begin{equation}
\label{asequevent} 
\left\{\M \leq \min \{ \sigma ',t \} \right\} \, = \, \left\{{\aS}(W', W)\leq t 
\right\} \quad a.s. 
\end{equation} 
and hence for $\Lambda_\bot$ we have the equality sign in 
(\ref{encapsprob}). 
\end{remark}

\medskip

We will use the following parameterization of hyperplanes. Let
${\SaS}^{\ell -1}$ be the unit hypersphere in $\RR^\ell$. 
For $H\in {\cal H}$, $d(h)\in \RR$ denotes its signed distance 
from the origin and $u(h)\in {\SaS}^{\ell -1}$ is its normal direction. 
We denote by $H(u,d)$ the hyperplane 
with the respective parameters $(u,d)\in \RR\times {\SaS}^{\ell -1}$. 
The image of the non-zero, locally finite and
translation invariant measure $\Lambda$ with respect to 
this parameterization can be written as the product measure
\begin{equation}
\label{prodmeas}
\gamma \cdot \lambda \otimes \theta, 
\end{equation}
where $\gamma >0$ is a constant, $\lambda$ is the Lebesgue measure on 
$\RR$ and $\theta$ is an even  probability measure on 
${\SaS}^{\ell -1}$ (cf, e.g. \cite{sw}, Theorem 4.4.1 and Theorem 13.2.12). 
Here $\theta$ is even means 
$\theta (A)= \theta (-A)$ for all Borel sets $A\subseteq {\SaS}^{\ell -1}$.
The property that there is no line in $\RR^\ell$ 
such that all hyperplanes of the support of $\Lambda$ 
are parallel to it, is equivalent to the property that $\theta$ is 
not concentrated on a great subsphere of 
${\SaS}^{\ell -1}$, i.e there is no one-dimensional 
subspace $L_1$ of $\RR^\ell$ (with the orthogonal complement 
$L_1^\bot$) such that the support of $\theta$ equals 
${\mathbb G}= L_1^\bot \cap {\SaS}^{\ell -1}$.

\medskip

Recall that $W'$ is a window with $0\in \Int(W')$.

\begin{lemma}
\label{goodW}
There exists a compact convex 
polytope $W$ with facets $f^W_1,\ldots ,f^W_{2\ell}$ and 
$W'\subset \Int(W)$, and pairwise disjoint sets $G'_a\subseteq [W'|f^W_a]$ 
such that $\Lambda (G'_a)>0$ for all $a=1,\ldots , 2\ell$.
\end{lemma}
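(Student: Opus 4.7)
The plan is to exploit the non-degeneracy assumption on the directional factor $\theta$ of $\Lambda$: select $\ell$ linearly independent unit normals in $\mathrm{supp}(\theta)$, build $W$ as a parallelepiped with facets orthogonal to these directions, and produce each $G'_a$ as the set of hyperplanes whose $(u,d)$-parameters lie in a small spherical cap crossed with a short interval of signed distances.

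First, since $\theta$ is not concentrated on any great subsphere of $\SaS^{\ell-1}$, the linear span of $\mathrm{supp}(\theta)$ is all of $\RR^\ell$, so I can extract linearly independent $u_1,\ldots,u_\ell\in\mathrm{supp}(\theta)$. The $2\ell$ points $\pm u_1,\ldots,\pm u_\ell$ are then pairwise distinct on $\SaS^{\ell-1}$, so I can choose small open spherical caps $V_c\ni u_c$ with $V_c\cap(-V_c)=\emptyset$ and $V_c\cap(\pm V_{c'})=\emptyset$ for $c\neq c'$; by the definition of support, each $\theta(V_c)>0$.

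Second, define $W=\{x\in\RR^\ell : |\langle x,u_c\rangle|\leq R_c,\ c=1,\ldots,\ell\}$, choosing each $R_c>\max_{x\in W'}|\langle x,u_c\rangle|$ so that $W'\subset\Int(W)$. Because the $u_c$ are linearly independent, $W$ is a bounded compact convex polytope with exactly $2\ell$ facets $f^W_{c,s}=\{x\in W:\langle x,u_c\rangle=sR_c\}$, indexed by $c\in\{1,\ldots,\ell\}$ and $s\in\{+,-\}$. Since $0\in\Int(W')$, the open intervals $I_c^+:=(\max_{x\in W'}\langle x,u_c\rangle,\,R_c)$ and $I_c^-:=(-R_c,\,\min_{x\in W'}\langle x,u_c\rangle)$ are nonempty, lie in $\RR_+$ and $\RR_-$ respectively, and consist precisely of those signed distances $d$ for which $H(u_c,d)$ separates $W'$ from $f^W_{c,s}$.

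Third, set $G'_{c,s}$ to be the hyperplanes $H(u,d)$ with $(u,d)\in V_c\times J_c^s$, where $J_c^s$ is a closed subinterval of $I_c^s$; a short continuity argument, shrinking $V_c$ and $J_c^s$ if necessary, ensures that every such hyperplane still separates $W'$ from $f^W_{c,s}$, i.e.\ $G'_{c,s}\subseteq[W'|f^W_{c,s}]$. The product representation $\Lambda=\gamma\cdot\lambda\otimes\theta$ then bounds $\Lambda(G'_{c,s})$ below by a positive multiple of $|J_c^s|\cdot\theta(V_c)>0$. Pairwise disjointness across different $c$ follows from the sets $V_c\cup(-V_c)$ being pairwise disjoint, and for fixed $c$ across different $s$ from $J_c^+$ and $J_c^-$ lying on opposite sides of $0$. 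The only delicate point is bookkeeping the parameter redundancy $H(u,d)=H(-u,-d)$; this is resolved by fixing a consistent orientation on each cap so that no hyperplane is doubly parameterized, after which the positivity and disjointness claims are immediate.
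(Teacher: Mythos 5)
Your proof is correct and follows essentially the same route as the paper: choose normal directions in the support of $\theta$, build $W$ from the corresponding slabs/half-spaces, and take each $G'_a$ as a product of a small spherical cap (of positive $\theta$-measure, pairwise disjoint up to antipodes) with a short interval of signed distances, using the product form $\gamma\,\lambda\otimes\theta$ of $\Lambda$ for positivity and a compactness/continuity argument for the inclusion $G'_a\subseteq[W'|f^W_a]$. The paper works with $2\ell$ support directions having $0$ in the interior of their convex hull rather than $\ell$ linearly independent directions and their antipodes, but its remark following the lemma explicitly notes that the antipodal (parallelepiped) choice is admissible, which is exactly your construction.
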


\begin{proof}
For $u\in {\SaS}^{\ell -1}$ we denote by $H_{W'}(u)$ the supporting 
(i.e. tangential) hyperplane to $W'$
with normal direction $u\in {\SaS}^{\ell -1}$. By $h_{W'}(u)$
we denote the distance from the origin to $H_{W'}(u)$. 
This is the support function of $W'$, 
$h_{W'}(u)=\max\{<x,u>: x\in W'\}$
(see \cite{sw}, p. 600). With this notation we have 
$H_{W'}(u)=H(u,h_{W'}(u))$. Note that for $d\in \RR$ the hyperplane
$H(u,d)$ is parallel to $H_{W'}(u)$ at signed distance $d$ from 
the origin. 

\medskip

The shape of the window $W$ will depend on $\Lambda$. 
We use some ideas of the proof of Theorem 10.3.2 in \cite{sw}. 
Under the given assumptions on the support of $\Lambda$ there exist points 
$u_1,\ldots ,u_{2\ell}\in {\SaS}^{\ell -1}$ 
which all belong to the support of $\theta$ and 
$0\in \Int(conv\{ u_1,\ldots ,u_{2\ell}\})$, 
i.e. the origin is in the interior of the 
convex hull. Now, the facets $f^W_a$ of $W$ are chosen to have normals $u_a$, 
and their distance from the origin is $h_{W'}(u_a)+3$, $a=1,\ldots , 2\ell$. 
Formally,
$$
W=\bigcap_{a=1}^{2\ell }  H(u_a,h_{W'}(u_a)+3)^+ .
$$ 
Notice that the described condition on the choice of the 
directions $u_1,\ldots ,u_{2\ell}$ guarantees that $W$ is bounded 
(see the proof of Theorem 10.3.2 in \cite{sw}).

\smallskip

The definition of the support of a measure and some continuity arguments 
(applied to sets of hyperplanes) 
yield that for all $u_a$ there are pairwise disjoint neighborhoods 
$U_a \subset {\SaS}^{\ell -1} $ such that 
$\theta (U_a)>0$, and for the sets of hyperplanes 
$$
G'_a = \{ H \in {\cal H}:\, u(H)\in U_a, \,
h_{W'}(u_a)+1 < d(H)< h_{W'}(u_a)+2\}\,,
$$ 
it holds $G'_a \subset [W'|f^W_a]$. Hence $\Lambda ([W'|f^W_a])
\geq \Lambda (G'_a) = \gamma \, \theta (U_a) >0$ for all $a=1,\ldots , 2\ell$. 
Since the $U_a$ are pairwise disjoint, 
also the sets $G'_a, a=1,\ldots , 2\ell ,$ have this property. $\Box$
\end{proof}

\medskip

\begin{remark}
Because the directional distribution $\theta$ is assumed to be an even measure, 
in the construction above one can choose $u_{\ell+a}=-u_a$, $a=1,\ldots ,\ell$. 
Then the facets $f^W_a$ and $f^W_{\ell +a}$ are parallel. 
\end{remark}

\medskip

\begin{lemma}
\label{goodWeps}
The compact convex polytope $W$ constructed in Lemma \ref{goodW} also 
satisfies the following property: $\forall \,  \varepsilon >0$, 
$\exists \, t^*(\varepsilon) >0$ such that the following
encapsulation time relation holds, 
\begin{equation}
\label{goodWepsi}
\forall \,  s\in (0,t^*(\varepsilon)], \; \exists \, r(s)\ge 1
\hbox{ such that } \forall r\ge r(s):\;\; 
\PP ({\aS}(W',r W)\leq s )> 1-\varepsilon\,.
\end{equation}
\end{lemma}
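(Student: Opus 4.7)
My plan is to apply Lemma \ref{lemma_encapsprob1} to the pair $(W', rW)$ with separating sets enlarged radially in $r$, and to show that the resulting lower bound tends to $1$ as $r \to \infty$ for every fixed $s > 0$. Since this gives the desired estimate for every $s > 0$, any positive constant (e.g.\ $t^*(\varepsilon) = 1$) will then satisfy the statement of the lemma.

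Concretely, the window $rW$ has $2\ell$ facets $f^{rW}_a$ with the same outer normals $u_a \in \mathrm{supp}(\theta)$ as $W$, but at signed distance $r(h_{W'}(u_a)+3)$ from the origin. Using the parameterization $\Lambda = \gamma \cdot \lambda \otimes \theta$ from (\ref{prodmeas}) and the pairwise disjoint neighborhoods $U_a \subset \SaS^{\ell -1}$ from the proof of Lemma \ref{goodW}, I set
$$ G'_a(r) = \{H \in \hH : u(H) \in U_a,\ h_{W'}(u_a)+1 < d(H) < r(h_{W'}(u_a)+3) - 1\}. $$
By construction these sets are pairwise disjoint (the $U_a$ are), each $G'_a(r) \subseteq [W'|f^{rW}_a]$, and
$$ \Lambda(G'_a(r)) = \gamma\, \theta(U_a)\,\bigl(r(h_{W'}(u_a)+3) - h_{W'}(u_a) - 2\bigr) \longrightarrow \infty \quad\text{as } r \to \infty. $$

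Substituting these sets into the bound (\ref{encapsprob}), for fixed $s > 0$ each factor $(1-e^{-s\Lambda(G'_a(r))})$ tends to $1$, so the first summand converges to $e^{-s\Lambda([W'])}$. For the integral, the integrand is dominated by the integrable function $\Lambda([W'])e^{-x\Lambda([W'])}$ on $(0,s]$ and converges pointwise for each $x>0$ to it; dominated convergence then yields
$$ \int_0^s \Lambda([W'])\,e^{-x\Lambda([W'])}\prod_{a=1}^{2\ell}\bigl(1-e^{-x\Lambda(G'_a(r))}\bigr)\,\mathrm{d}x \;\longrightarrow\; 1 - e^{-s\Lambda([W'])}. $$
Adding the two limits gives $1$, so $\PP(\aS(W', rW) \leq s) \to 1$ as $r \to \infty$.

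To pass from convergence to the uniform ``for all $r \geq r(s)$'' form demanded by the lemma, I observe that $r \mapsto \Lambda(G'_a(r))$ is nondecreasing, and the right-hand side of (\ref{encapsprob}) is nondecreasing in each $\Lambda(G'_a)$; hence once the lower bound exceeds $1-\varepsilon$ at some $r(s)$, it continues to do so for all $r \geq r(s)$. The only mild subtlety is designing the $G'_a(r)$ so that enlarging $r$ strictly grows their $\Lambda$-measure while keeping them pairwise disjoint and contained in $[W'|f^{rW}_a]$; the choice above achieves this by fixing the direction in $U_a$ and letting the signed distance range across the enlarged slab. No additional obstacle arises.
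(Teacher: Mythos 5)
Your overall strategy --- apply Lemma \ref{lemma_encapsprob1} to the pair $(W',rW)$ with separating hyperplane sets whose $\Lambda$-measure grows linearly in $r$, and then let $r\to\infty$ --- is the same as the paper's, but the specific sets you chose do not satisfy the hypothesis (\ref{partition}) of that lemma: the claimed containment $G'_a(r)\subseteq[W'\,|\,f^{rW}_a]$ is false in general. The facet $f^{rW}_a=r\,f^W_a$ grows linearly with $r$ while your angular tolerance $U_a$ stays fixed. Writing $m_a(u)=\min\{\langle y,u\rangle:\,y\in f^W_a\}$, a hyperplane $H(u,d)$ separates $W'$ from $f^{rW}_a$ only if $d<r\,m_a(u)$; for $u\in U_a$ with $u\neq u_a$ one has $m_a(u)<h_{W'}(u_a)+3$ strictly (unless the facet degenerates in the direction $u-u_a$), so your upper limit $d<r(h_{W'}(u_a)+3)-1$ exceeds $r\,m_a(u)$ as soon as $r>1/\bigl(h_{W'}(u_a)+3-m_a(u)\bigr)$. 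Concretely in the plane: if $u_a=(1,0)$, the facet of $rW$ is the segment from $(4r,-5r)$ to $(4r,5r)$, and $u=(\cos\alpha,\sin\alpha)$ with $\alpha>0$, then the vertex $(4r,-5r)$ satisfies $\langle (4r,-5r),u\rangle\le 4r-1$ once $4r(1-\cos\alpha)+5r\sin\alpha\ge 1$, i.e.\ it falls on the $W'$-side of $H(u,4r-1)$ for all large $r$. Hence, for any $\theta$ not concentrated on the single direction $u_a$ inside $U_a$ (e.g.\ the isotropic case), a positive $\Lambda$-measure portion of your $G'_a(r)$ consists of hyperplanes that do not separate $W'$ from $f^{rW}_a$, and Lemma \ref{lemma_encapsprob1} cannot be invoked with these sets.

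The gap is local and fixable. The continuity argument in the proof of Lemma \ref{goodW} guarantees exactly $m_a(u)\geq h_{W'}(u_a)+2$ for $u\in U_a$, so capping your slab at $d<r\bigl(h_{W'}(u_a)+2\bigr)$ instead of $r\bigl(h_{W'}(u_a)+3\bigr)-1$ restores the containment while still giving $\Lambda(G'_a(r))\to\infty$ linearly in $r$; the paper achieves the same by using the radially scaled sets $rG'_a=\{rH:\,H\in G'_a\}$, i.e.\ $d\in\bigl(r(h_{W'}(u_a)+1),\,r(h_{W'}(u_a)+2)\bigr)$, for which the containment $rG'_a\subset[W'\,|\,r f^W_a]$ is immediate and $\Lambda(rG'_a)=\gamma r\theta(U_a)$. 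Once this is corrected, the rest of your argument is sound and in one respect cleaner than the paper's: by retaining the integral term of (\ref{encapsprob}) and using dominated convergence you obtain $\PP(\aS(W',rW)\leq s)\to 1$ for every fixed $s>0$, so no smallness restriction on $s$ is needed and one may simply take $t^*(\varepsilon)=1$, whereas the paper discards the integral term and must therefore impose ${\rm e}^{-s\Lambda([W'])}>\sqrt{1-\varepsilon}$, i.e.\ $s\le t^*(\varepsilon)$.
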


\begin{proof}
Let us use the notation introduced in the Lemma \ref{goodW} and in its proof. 
For $r>0$ we set $r G'_a=\{ rh:\, h\in G'_a\}$. 
Then, elementary linear algebra yields that 
$G'_a \subset [W'|f^W_a]$ implies 
$r G'_a \subset [W'| r f^W_a]$ for all $r>1$. 
Furthermore, from (\ref{prodmeas}) we find 
$$
\forall \, a=1,\ldots , 2\ell\,: \;\;\, 
\Lambda ([W'|\, r \,f^W_a])\geq \Lambda (r G'_a) 
= \gamma \, r \, \theta (U_a) \,.
$$ 
Now denote by 
$$
L =\min \{ \Lambda (G'_a) = \gamma \theta (U_a) : \ a=1,\ldots , 2\ell \} .
$$ 
We have $L>0$, and (\ref{encapsprob}) yields
\begin{eqnarray*}
&{}& \PP ({\aS}(W' , r W)\leq s ) \\ 
&{}& \geq  {\rm e}^{-s \Lambda ([W'])} \,  
\left(  1- {\rm e}^{- s r L} \right) ^{2\ell } +
\int_0^s \Lambda ([W'])  {\rm e}^{-x \Lambda ([W'])}   
\left(  1- {\rm e}^{-x r L} \right) ^{2\ell} {\rm d} x \\ 
&{}& >  {\rm e}^{-s \Lambda ([W'])} \,  
\left(  1- {\rm e}^{- s r L} \right) ^{2\ell } . 
\end{eqnarray*}

Note that
\begin{equation}
\label{defint1}
\forall \varepsilon \in (0,1), \, \exists \, t^*(\varepsilon)>0
\hbox{ such that } {\rm e}^{- t^*(\varepsilon) \Lambda ([W'])}
> \sqrt{1-\varepsilon}.
\end{equation}
Then for all $s\in (0,t^*(\varepsilon)]$ we have
${\rm e}^{-s \Lambda ([W'])}> \sqrt{1-\varepsilon}$. 
Furthermore, for any such $s\in (0,t^*(\varepsilon)]$ there is an 
$r(s)\ge 1$ with $\left( 1- {\rm e}^{- s r(s) L} \right) ^{2\ell } > 
\sqrt{1-\varepsilon}$. This finishes the proof. $\Box$
\end{proof}

\medskip

\begin{lemma}
\label{nojump}
For all $W'$, all $t>0$ and all $\varepsilon >0$ and  $t^*(\varepsilon)$  such that (\ref{goodWepsi}) holds, there exists 
$t_1=t_1(\varepsilon,t)\in (0,\min\{t,t^*(\varepsilon)\})$  
 such that for all $t_2\in (0,t_1]$,
$$
\PP (Y\wedge W' \mbox{ has no jump in } [t-t_2,t)) > 1-\varepsilon \,.
$$
\end{lemma}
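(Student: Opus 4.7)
The plan is to exploit the fact that $Y\wedge W'$ is a pure jump Markov process on $\T\wedge W'$ with no explosion, as recalled in Section \ref{constr}. This means that almost surely only finitely many jumps occur on any bounded time interval, so jump times cannot accumulate at the deterministic time $t$ from the left.

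Concretely, set
$$
N_{t_2} = \{Y\wedge W' \text{ has a jump in } [t-t_2,t)\}, \qquad t_2\in(0,t),
$$
and observe that the family $(N_{t_2})$ is decreasing as $t_2\downarrow 0$, because $[t-t_2,t)\subseteq[t-t_1,t)$ whenever $t_2\le t_1$. On the event of finitely many jumps in $[0,t]$ (which has probability one, since $\xi_t<\infty$ a.s. and there is no explosion), there is a random $\delta>0$ such that no jump occurs in $[t-\delta,t)$. Hence $\bigcap_{t_2>0}N_{t_2}$ is contained in a $\PP$-null set, and by continuity of probability from above,
$$
\lim_{t_2\downarrow 0}\PP(N_{t_2}) = 0.
$$

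Given $\varepsilon>0$ and $t^*(\varepsilon)$ from Lemma \ref{goodWeps}, I would then pick $t_1\in(0,\min\{t,t^*(\varepsilon)\})$ small enough that $\PP(N_{t_1})<\varepsilon$. Monotonicity of $N_{t_2}$ in $t_2$ delivers $\PP(N_{t_2})\le\PP(N_{t_1})<\varepsilon$ for every $t_2\in(0,t_1]$, which is precisely the conclusion.

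The one point that needs a moment's care is to justify that the almost sure finiteness of jumps on compact time intervals is enough; this is exactly the no-explosion property quoted from \cite{nw} in Section \ref{constr}, so there is no real obstacle. The rest is a standard continuity-of-measure argument together with the trivial monotonicity in $t_2$, and the extra constraint $t_1<t^*(\varepsilon)$ needed for later use in combination with Lemma \ref{goodWeps} is simply built into the choice of $t_1$.
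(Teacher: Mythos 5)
Your argument is correct, but it takes a genuinely different and softer route than the paper's. You reduce the claim to the purely measure-theoretic statement $\lim_{t_2\downarrow 0}\PP(N_{t_2})=0$, obtained by continuity from above once one knows that almost surely only finitely many jumps of $Y\wedge W'$ occur in $[0,t]$ (equivalently $\xi'_t<\infty$ a.s., i.e.\ the no-explosion property), so that jump times cannot accumulate at $t$ from the left; monotonicity of $N_{t_2}$ in $t_2$ then gives the required uniformity over $t_2\in(0,t_1]$, and the constraint $t_1<\min\{t,t^*(\varepsilon)\}$ is absorbed into the choice of $t_1$. The paper instead argues quantitatively through the Markov structure: it introduces the total jump rate $\zeta_s=\sum_{i=1}^{\xi'_s}\Lambda([C_s^{'i}])$, notes that $\zeta_s$ is nondecreasing in $s$ by subadditivity of $\Lambda$, uses no explosion only to produce an $x_0$ with $\PP(\zeta_s\le x_0)>\sqrt{1-\varepsilon}$ for all $s\in[0,t]$, and conditions on $\zeta_{t-t_1}$ to obtain the explicit lower bound ${\rm e}^{-t_1x_0}\,\PP(\zeta_{t-t_1}\le x_0)$ for the no-jump probability. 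Your version is shorter and uses only that $Y\wedge W'$ is a non-exploding pure jump process, at the price of being non-constructive in $t_1$; the paper's version yields an in-principle computable $t_1$ in terms of $x_0$ and $\varepsilon$. The only point worth making explicit in your write-up is that, since the family $(N_{t_2})_{t_2>0}$ is monotone, the continuity-from-above argument may be run along the countable sequence $t_2=1/n$, so no uncountable-intersection issue arises.
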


\begin{proof} 
Let $\{C_t^{'i}: i=1,...,\xi'_t\}$ be the family of cells of 
the pure jump Markov process $Y_t\wedge W'$. The lifetimes of 
$C_t^{'i}$ are exponentially distributed with the parameters 
$\Lambda ([C_t^{'i}])$ and they are conditionally independent 
conditioned that a certain set of cells is given at time $t$.
Then, given a certain set of cells at $t$, 
the minimum of the lifetimes is exponentially distributed 
with parameter $\zeta_t = \sum_{i=1}^{\xi'_t} \Lambda ([C_t^{'i}])$.

\medskip

Notice that $\zeta_t$ is monotonically increasing in 
$t$, because if at some time a cell $C'$ is divided into the
cells $C'',\, C'''$  we have $C'=C''\cup C'''$ and 
$[C']=[C'']\cup [C''']$. Then, 
by subadditivity of $\Lambda$,
$$
\Lambda ([C'])=\Lambda ([C'']\cup [C'''])\le 
\Lambda ([C''])+\Lambda ([C'''])\,.
$$
Since the process $Y\wedge W'$ has no
explosion, for any fixed $t>0$ $\, \exists \, x_0>0$ 
such that for all $s\in [0,t]$ we have 
$\PP(\zeta_s \leq x_0) > \sqrt{1-\varepsilon}$. We fix 
$t_1=t_1(\varepsilon,t)\in (0,\min\{t,t^*(\varepsilon)\})$ as a value 
which also satisfies 
${\rm e}^{-t_1(\varepsilon) \, x_0 } > \sqrt{1-\varepsilon}$. 
This yields for all $t_2\in (0,t_1]$ that 
\begin{eqnarray*} 
&{}&\PP(Y\wedge W' \mbox{ has no jump in } [t-t_2,t))\\ 
&{}&\ge 
\quad \PP(Y\wedge W' \mbox{ has no jump in } [t-t_1,t))\\
&{}&= 
\int_0^\infty \PP(Y\wedge W' \mbox{ has no jump in } [t-t_1,t)
\, | \, \zeta_{t-t_1} =x) \PP(\zeta_{t-t_1}\in dx)\\
&{}&\ge
\int_0^{x_0} \PP(Y\wedge W' \mbox{ has no jump in } [t-t_1,t)
\, | \, \zeta_{t-t_1} =x) \PP(\zeta_{t-t_1}\in dx)\\
&{}&= 
\int_0^{x_0} {\rm e}^{-t_1 \, x} \PP(\zeta_{t-t_1}\in dx)
\geq {\rm e}^{-t_1 \, x_0 } \PP (\zeta_{t-t_1} \leq x_0 ) 
> 1-\varepsilon \,.
\end{eqnarray*} $\Box$
\end{proof}

\medskip

In the sequel for $t>0$ and $\varepsilon >0$ the quantity
$t_1=t_1(\varepsilon,t)$ {is the one given by Lemma \ref{nojump}.

\medskip

Recall that under the identification ${\cal E}$ with ${\cal E}\wedge W$ 
we can write $\PP(Y\in {\cal E})=\PP(Y\wedge W'\in {\cal E})$ for all
${\cal E}\in {\cal B}(\T_{W'})$, see (\ref{simplifnot1}). This identification
also allows us to put for all ${\cal E}\in {\cal B}(\T_{W'}), \D\in {\cal B}(\T_{W^c}), s>0$:
$$
\PP(Y_t\in {\cal E}\cap \D, {\aS}(W',W)<s)=\PP(Y_t\wedge W'\in {\cal E}, Y_t\in \D, 
{\aS}(W',W)<s)\, .
$$

\medskip

\begin{lemma}
\label{eventsmallwindow}
For all $t>0$, $\varepsilon >0$ and $t_2\in (0,t_1]$, we have 
$$
\forall \, s\in (0,t_2],\,  \forall \, {\cal E}\in {\cal B}(\T_{W'})\,:\;\;\,
|\PP (Y_t \in {\cal E})-\PP (Y_{t-s}\in {\cal E})| < \varepsilon .
$$
\end{lemma}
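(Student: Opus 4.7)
The plan is to use Lemma \ref{nojump} directly: on the event that $Y \wedge W'$ has no jump in the time interval $[t-t_2,t)$, the marginals at times $t$ and $t-s$ agree when restricted to $W'$, provided $s \in (0, t_2]$. Since this event has probability exceeding $1 - \varepsilon$, the two distributions can differ by at most $\varepsilon$ on any ${\cal E} \in {\cal B}(\T_{W'})$.

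More concretely, I would fix $t > 0$, $\varepsilon > 0$, $t_2 \in (0, t_1]$ and denote by $A$ the event that $Y \wedge W'$ has no jump in $[t-t_2, t)$. By Lemma \ref{nojump}, $\PP(A) > 1 - \varepsilon$. For any $s \in (0, t_2]$ we have $[t-s, t) \subseteq [t-t_2, t)$, so on $A$ the pure jump trajectory of $Y \wedge W'$ is constant on $[t-s, t)$, hence $Y_t \wedge W' = Y_{t-s} \wedge W'$ on $A$. Using the identification (\ref{simplifnot1}) between events in ${\cal B}(\T_{W'})$ and their restrictions to $W'$, it follows that for every ${\cal E} \in {\cal B}(\T_{W'})$,
\begin{equation*}
\PP(Y_t \in {\cal E}, A) = \PP(Y_t \wedge W' \in {\cal E}, A) = \PP(Y_{t-s} \wedge W' \in {\cal E}, A) = \PP(Y_{t-s} \in {\cal E}, A).
\end{equation*}

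Subtracting the two decompositions $\PP(Y_t \in {\cal E}) = \PP(Y_t \in {\cal E}, A) + \PP(Y_t \in {\cal E}, A^c)$ and the analogue for $Y_{t-s}$, the contribution from $A$ cancels and we are left with
\begin{equation*}
\bigl|\PP(Y_t \in {\cal E}) - \PP(Y_{t-s} \in {\cal E})\bigr| = \bigl|\PP(Y_t \in {\cal E}, A^c) - \PP(Y_{t-s} \in {\cal E}, A^c)\bigr| \le \PP(A^c) < \varepsilon,
\end{equation*}
which is the desired inequality.

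There is no real obstacle; the argument is essentially a one-line coupling/total-variation estimate once Lemma \ref{nojump} is in hand. The only subtle point worth stating explicitly is that the no-jump event refers to the process $Y \wedge W'$ (not $Y$ itself, nor $Y \wedge W$), so that the identification $\PP(Y_t \in {\cal E}) = \PP(Y_t \wedge W' \in {\cal E})$ for ${\cal E} \in {\cal B}(\T_{W'})$ is exactly what allows the cancellation on $A$ to take place.
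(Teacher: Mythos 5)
Your argument is correct and is essentially the paper's own proof: both reduce the claim to the observation that $\{Y_t\wedge W'\in{\cal E}\}$ and $\{Y_{t-s}\wedge W'\in{\cal E}\}$ can only differ on the event that $Y\wedge W'$ jumps in $[t-t_2,t)$, whose probability is below $\varepsilon$ by Lemma \ref{nojump}. The paper phrases this via the symmetric-difference bound $|\PP(\Gamma)-\PP(\Theta)|\le\PP(\Gamma\Delta\Theta)$ while you decompose over $A$ and $A^c$, but these are the same estimate.
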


\begin{proof}
We have 
$$
\forall s\in (0,t_2]\,:\;\;
\{Y\wedge W' \hbox{ has no jump in } [t-t_2,t)\}\subseteq
\{Y\wedge W' \hbox{ has no jump in }[t-s,t) \}\,.
$$ 
Then,
\begin{eqnarray*}
&{}& \;\;
\{Y_t\wedge W' \in {\cal E}, Y\wedge W' \mbox{ has no jump in } [t-t_2,t)\}\\
&{}& \;\;
=\{Y_{t-s}\wedge W' \in {\cal E}, Y\wedge W' \mbox{ has no jump in } [t-t_2,t)\}.
\end{eqnarray*}
Therefore $\{Y_t \wedge W' \in {\cal E}\} \Delta
\{Y_{t-s}\wedge W' \in {\cal E}\}\subseteq 
\{Y\wedge W' \mbox{ has some jump in } [t-t_2,t)\}$.
By using the relation 
$|\PP(\Gamma)-\PP(\Theta)|\le \PP(\Gamma\Delta \Theta)$, the result follows. $\Box$
\end{proof}

\medskip

In the following results $W$ is a window such that $W'\subset \Int(W)$.
We use the notation ${\aS}={\aS}(W',W)$ for the encapsulation time.

\begin{proposition}
\label{corevents}
For all $t>0$, $\varepsilon >0$ and $t_2\in (0,t_1]$, we have
$$
\forall {\cal E}\in {\cal B}(\T_{W'}):\;\;
|\PP (Y_t \in {\cal E})-\PP (Y_t \in {\cal E} \, | \, {\aS}<t_2 )| 
< \varepsilon\,.
$$
\end{proposition}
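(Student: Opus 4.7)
The plan is to leverage the strong Markov property of $Y\wedge W$ at the stopping time $\aS$, together with Lemma \ref{eventsmallwindow}. The key observation is that on the event $\{\aS=s\}$, $W'$ sits entirely inside the $0$-cell $C_s$, so $Y_s\wedge W'=\{W'\}$ is trivial; from time $\aS$ onwards the tessellation inside $C_s$ evolves as a fresh STIT on $C_s$, and by consistency across windows its restriction to $W'\subseteq C_s$ at time $t$ has the same law as $Y_{t-s}\wedge W'$.

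Formally, I would first disintegrate
$$
\PP(Y_t\in{\cal E}\,|\,\aS<t_2)=\int_{(0,t_2]}\PP(Y_t\in{\cal E}\,|\,\aS=s)\,\mu(ds),
$$
where $\mu$ denotes the regular conditional distribution of $\aS$ given $\{\aS<t_2\}$. By the strong Markov property of the pure-jump Markov process $Y\wedge W$ at $\aS$, combined with the independent, fresh-start evolution of cells recalled in Section \ref{constr}, conditional on $\aS=s$ and on $C_s$ the tessellation inside $C_s$ at time $t$ is distributed as a STIT on $C_s$ at time $t-s$ started from $\{C_s\}$. Since $W'\subseteq C_s$, the consistency of STIT across windows yields $\PP(Y_t\wedge W'\in{\cal E}\,|\,\aS=s)=\PP(Y_{t-s}\wedge W'\in{\cal E})$, which under the identification (\ref{simplifnot1}) equals $\PP(Y_{t-s}\in{\cal E})$.

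Since $s\in(0,t_2]\subseteq(0,t_1]$, Lemma \ref{eventsmallwindow} provides $|\PP(Y_t\in{\cal E})-\PP(Y_{t-s}\in{\cal E})|<\varepsilon$ for every such $s$. Integrating against $\mu$ then gives
$$
|\PP(Y_t\in{\cal E})-\PP(Y_t\in{\cal E}\,|\,\aS<t_2)|\le\int_{(0,t_2]}|\PP(Y_t\in{\cal E})-\PP(Y_{t-s}\in{\cal E})|\,\mu(ds)<\varepsilon,
$$
which is the claimed bound.

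The principal obstacle is the strong Markov step: one must argue rigorously that conditional on $\aS=s$ and on the random cell $C_s$, the restriction of $Y\wedge W$ to $W'$ after $\aS$ is distributed as a fresh STIT on $C_s$, and not merely as the conditioned inherited process. This relies on (i) the strong Markov property of the pure-jump process $Y\wedge W$ at the stopping time $\aS$, (ii) the independent, fresh-start evolution of cells in the construction of Section \ref{constr} (together with the memorylessness of the exponential cell lifetimes, which ensures that the cell $C_s$ alive at $\aS$ effectively restarts its STIT dynamics), and (iii) the consistency of STIT across nested windows to pass from the STIT on the random cell $C_s$ back to $Y_{t-s}\wedge W'$.
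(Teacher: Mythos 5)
Your proposal is correct and follows essentially the same route as the paper: disintegrate over $\{ {\aS}=s\}$, establish $\PP(Y_t\in{\cal E}\mid {\aS}=s)=\PP(Y_{t-s}\in{\cal E})$ using that $W'\subseteq C^1_s$ forces $Y_s\wedge W'=\{W'\}$ on this event, and then apply Lemma \ref{eventsmallwindow} under the integral. The only difference is that the step you flag as the principal obstacle is closed in the paper not by a strong Markov argument at the random time ${\aS}$ but by the independent-increments relation (\ref{iterate}), $Y_t\sim Y_s\boxplus {\vec Y}'_{t-s}$ with ${\vec Y}'$ independent of $Y$ (hence of the event $\{{\aS}=s\}$), which makes the fresh-start claim immediate.
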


\begin{proof}
Let us first show,
\begin{equation}
\label{condA}
\forall s\in (0,t),\, 
\forall {\cal E}\in {\cal B}(\T_{W'}): \;\; 
\PP(Y_t \in {\cal E} \, | \; {\aS}= s) =\PP(Y_{t-s}\in {\cal E})\,.
\end{equation}
Let ${\vec Y}'=({Y'}^m: m\in \NN)$ be a sequence of
independent copies of $Y$, and
also independent of $Y$. Relation (\ref{iterate}) yields 
$$
Y_{t}\wedge W' \ \sim \  (Y_s\boxplus {\vec Y}'_{t-s})\wedge W'\  
\sim \ (Y_s\wedge W') \boxplus {\vec Y}'_{t-s}\,. 
$$
On the event ${\aS}=s$ we have $W'\subseteq C^1_s$, this last is the cell 
containing the origin at time $s$, and thus $Y_s\wedge W'=\{ W'\}$.
Hence, on ${\aS}=s$ we have 
$(Y_s\wedge W') \boxplus {\vec Y}'_{t-s}\sim Y'^{1}_{t-s} \wedge W'$. 
Therefore,
\begin{equation}
\label{simplifynot2}
\PP(Y_t \wedge W'\in {\cal E} \, | \,  {\aS}= s)=
\PP(Y'^{1}_{t-s} \wedge W' \in {\cal E} \, | \, {\aS}= s)
=\PP(Y'^{1}_{t-s} \wedge W'\in {\cal E}).
\end{equation}
Since $Y'^{1}_{t-s}\sim Y_{t-s}$, relation (\ref{condA}) is satisfied.
Hence,
\begin{eqnarray*}
&{}&\quad  
|\PP (Y_t\in {\cal E})-\PP (Y_{t}\in {\cal E} \, | \, {\aS}<t_2 )| \\
&{}&= \left| \PP (Y_t \in {\cal E})- \frac{1}{\PP ({\aS}<t_2)} 
\int_0^{t_2} \PP (Y_{t}\in {\cal E} \, | \, S=s)\; \PP({\aS}\in ds) \right| \\
&{}&= \left| \PP (Y_t \in {\cal E})- \frac{1}{\PP ({\aS}<t_2)} 
\int_0^{t_2} \PP (Y_{t-s}\in {\cal E})\; \PP({\aS}\in ds) \right| \\
&{}&= \frac{1}{\PP (S<t_2)}
\left| \int_0^{t_2} \PP (Y_t \in {\cal E})- 
\PP (Y_{t-s}\in {\cal E} )\; \PP({\aS}\in ds) \right| \\
&{}&\leq  \frac{1}{\PP ({\aS}<t_2)} \int_0^{t_2} |\PP (Y_t \in {\cal E})-   
\PP (Y_{t-s}\in {\cal E})|\; \PP(\aS\in ds) < \varepsilon \,,               
\end{eqnarray*} 
where in the last inequality we use Lemma \ref{eventsmallwindow}. $\Box$
\end{proof}

\medskip

Lemma \ref{eventsmallwindow}, Proposition \ref{corevents} and 
relation (\ref{condA}) obviously imply the following result.

\begin{corollary}
\label{condAS}
For all $t>0$, $\varepsilon >0$, $t_2\in (0,t_1]$ and all $s\in (0,t_2)$ it holds
\begin{eqnarray*}
\forall {\cal E}\in {\cal B}(T_{W'}):\;\, &{}& |\PP (Y_{t-s} \in {\cal E})-
\PP (Y_{t}\in {\cal E} \, | \,{\aS}<t_2 )|\\
&{}& \; =|\PP (Y_{t} \in {\cal E} \, | \, {\aS}=s)-
\PP (Y_{t}\in {\cal E} \, | \, {\aS}<t_2)|<2 \varepsilon.
\end{eqnarray*}
\end{corollary}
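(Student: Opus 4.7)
The plan is to observe that this corollary is essentially a packaging of Lemma \ref{eventsmallwindow} and Proposition \ref{corevents} via a triangle inequality, with the equality of the two absolute values being exactly the content of formula (\ref{condA}) proved inside Proposition \ref{corevents}.

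First I would establish the equality of the two quantities inside the absolute values. By (\ref{condA}), for every $s \in (0,t)$ and every ${\cal E} \in {\cal B}(\T_{W'})$ one has $\PP(Y_t \in {\cal E} \mid {\aS} = s) = \PP(Y_{t-s} \in {\cal E})$. Substituting this identity into the second absolute value immediately rewrites it as the first one, so no further work is needed for the equality.

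Next I would establish the bound $<2\varepsilon$. Fix $t>0$, $\varepsilon>0$, $t_2 \in (0, t_1]$, $s \in (0, t_2)$, and ${\cal E} \in {\cal B}(\T_{W'})$. Insert $\PP(Y_t \in {\cal E})$ as an intermediate term and apply the triangle inequality:
\begin{eqnarray*}
|\PP(Y_{t-s} \in {\cal E}) - \PP(Y_t \in {\cal E} \mid {\aS} < t_2)|
&\le& |\PP(Y_{t-s} \in {\cal E}) - \PP(Y_t \in {\cal E})| \\
&{}& + \, |\PP(Y_t \in {\cal E}) - \PP(Y_t \in {\cal E} \mid {\aS} < t_2)|.
\end{eqnarray*}
The first summand is strictly less than $\varepsilon$ by Lemma \ref{eventsmallwindow} (whose hypothesis $s \in (0, t_2]$ with $t_2 \in (0, t_1]$ is exactly what we have), and the second summand is strictly less than $\varepsilon$ by Proposition \ref{corevents}. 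Summing yields the bound $<2\varepsilon$.

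There is no real obstacle here: all the substantial work was already done in the preceding lemma and proposition (constructing $t_1$ so that $Y \wedge W'$ has no jumps in $[t-t_2,t)$ with high probability, and using the iteration identity together with the STIT strong Markov property to identify the conditional law on $\{{\aS} = s\}$ with the law of a fresh tessellation at time $t-s$). The corollary merely packages these two ingredients via triangle inequality, so the proof reduces to citing (\ref{condA}), Lemma \ref{eventsmallwindow}, and Proposition \ref{corevents}.
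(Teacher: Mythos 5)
Your proof is correct and matches the paper's intent exactly: the paper simply states that Lemma \ref{eventsmallwindow}, Proposition \ref{corevents} and relation (\ref{condA}) ``obviously imply'' the corollary, and your write-up spells out precisely that argument (equality via (\ref{condA}), then the triangle inequality through the intermediate term $\PP(Y_t\in{\cal E})$ with each summand bounded by $\varepsilon$).
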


\medskip

\begin{lemma}
\label{integralbound}
For all $t>0$, $\varepsilon >0$ and $t_2\in (0,t_1]$ we have
for all $\D\in {\cal B}(\T_{W'}),\, {\cal E}\in {\cal B}(\T_{W^c})$,
$$
|\PP(Y_t \in \D  \cap {\cal E} \, | \, {\aS}< t_2)
- \PP(Y_t \in \D \, | \, {\aS}< t_2)  
\PP( Y_t \in {\cal E} \, | \, {\aS}< t_2) | < 2 \varepsilon.
$$
\end{lemma}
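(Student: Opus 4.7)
The plan is to exploit the iteration representation (\ref{iterate}) conditionally on the encapsulation time and then integrate $s$ out using Corollary \ref{condAS}. First I would fix $s\in (0,t_2)$ and work on the event $\{\aS=s\}$: on this event the $0$-cell $C^1_s$ of $Y_s\wedge W$ satisfies $W'\subseteq C^1_s\subset \Int(W)$, so none of its facets lie on $\partial W$, and consequently $C^1_s$ coincides with the cell $\tilde C^1_s$ of the global tessellation $Y_s$ that contains the origin; in particular $W'\subseteq \tilde C^1_s$ and $\tilde C^1_s\cap W^c=\emptyset$.

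\medskip

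Next I would establish a conditional factorization of $\PP(Y_t\in \D\cap {\cal E}\mid \aS=s)$. By the Markov property, expressed through (\ref{iterate}), conditional on $Y_s$ the tessellation $Y_t$ has the distribution of $Y_s\boxplus {\vec Y}'_{t-s}$, where ${\vec Y}'=({Y'}^m: m\in \NN)$ are i.i.d.\ copies of $Y$ independent of $Y_s$. Since $\{\aS=s\}$ is $Y_s$-measurable, on this event, as in (\ref{simplifynot2}), $Y_t\wedge W'$ is determined by ${Y'}^{1}_{t-s}\wedge W'$, whereas $Y_t\wedge W^c$ is a functional of $Y_s\wedge W^c$ and of the independent copies $({Y'}^{k}_{t-s})_{k\neq 1}$. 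Because ${Y'}^{1}$ is independent of both $Y_s$ and $({Y'}^{k})_{k\neq 1}$, this yields the conditional independence $\PP(Y_t\in \D\cap {\cal E}\mid Y_s,\aS=s)=\PP(Y_t\in \D\mid Y_s,\aS=s)\,\PP(Y_t\in {\cal E}\mid Y_s,\aS=s)$. By (\ref{condA}) the first factor on the right equals the deterministic constant $\PP(Y_{t-s}\in \D)$, and taking expectation with respect to $\PP(\cdot\mid \aS=s)$ gives $\PP(Y_t\in \D\cap {\cal E}\mid \aS=s)=\PP(Y_{t-s}\in \D)\,\PP(Y_t\in {\cal E}\mid \aS=s)$.

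\medskip

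To conclude I would integrate this identity in $s$ against the probability measure $\PP(\aS\in ds)/\PP(\aS<t_2)$ on $(0,t_2)$; the left-hand side becomes $\PP(Y_t\in \D\cap {\cal E}\mid \aS<t_2)$. By Corollary \ref{condAS} the $s$-dependent factor $\PP(Y_{t-s}\in \D)$ differs from the $s$-independent quantity $\PP(Y_t\in \D\mid \aS<t_2)$ by less than $2\varepsilon$ for every $s\in (0,t_2)$, so replacing the former by the latter changes the integral by at most $2\varepsilon$, after which the remaining integral collapses to $\PP(Y_t\in \D\mid \aS<t_2)\,\PP(Y_t\in {\cal E}\mid \aS<t_2)$, yielding the asserted bound.

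\medskip

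The main obstacle is the conditional independence step. What has to be carefully justified is that on the encapsulation event the global $0$-cell of $Y_s$ truly insulates $W'$ from $W^c$, so that under the iteration $Y_s\boxplus {\vec Y}'_{t-s}$ the further subdivision of that single cell determines $Y_t\wedge W'$ while being probabilistically independent of the subdivisions of all other cells of $Y_s$, which together with $Y_s\wedge W^c$ determine $Y_t\wedge W^c$. Once this is in place, the remainder is a routine conditioning/averaging argument combined with Corollary \ref{condAS}.
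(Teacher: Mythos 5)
Your proposal is correct and follows essentially the same route as the paper: it establishes the conditional independence $\PP(Y_t\in \D\cap {\cal E}\mid \aS=s)=\PP(Y_t\in \D\mid \aS=s)\,\PP(Y_t\in {\cal E}\mid \aS=s)$ via the iteration representation (\ref{iterate}) on the event $\{\aS=s\}$ (where $W'\subseteq C^1_s$ forces $Y_t\wedge W'$ to depend only on ${Y'}^{1}_{t-s}$, independent of everything determining $Y_t\wedge W^c$), and then integrates over $s$ against $\PP(\aS\in ds)/\PP(\aS<t_2)$ using Corollary \ref{condAS} to replace the $s$-dependent factor at a cost of $2\varepsilon$. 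This is exactly the paper's argument for (\ref{condind1}) followed by its averaging step.
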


\begin{proof}
Firstly, we show the following conditional independence property,
\begin{eqnarray}
\nonumber
&{}& \forall \D\in {\cal B}(\T_{W'}),\, {\cal E}\in {\cal B}(\T_{W^c}):\\
\label{condind1}
&{}& 
\PP(Y_t\in \D \cap {\cal E} \, | {\aS}\!=\! s)= \PP(Y_t \in \D \, |  {\aS}\!=\! s) 
\PP( Y_t\in {\cal E} \, |  {\aS}\!=\! s) .
\end{eqnarray}
We use the notation introduced in the proof of Proposition \ref{corevents} and we also shortly write ${\cal E}$ instead of ${\cal E} \wedge W^c$.
Also the arguments are close to the ones in 
the proof of relation (\ref{condA}).

\medskip

Recall that on the event ${\aS}=s$ we have $W'\subseteq C^1_s$.
By $Y_s\boxplus (Y'^m_{t-s}: m \geq 2)$ we mean 
that the tessellations $Y'^m_{t-s}$ are nested only 
into the cells $C^m_s$ of $Y_s$ with $m \geq 2$, and not 
into the cell $C^1_s$. From (\ref{condA}), (\ref{simplifynot2}) and 
the independence of the random variables 
$Y_s$, $Y'^m_{t-s}$, $m \geq 1$ we obtain,
\begin{eqnarray*}
&{}&\quad \PP(Y_t\! \wedge \! W'\!\in \! \D , Y_t \in  {\cal E} \, | \, {\aS}= s)\\
&{}&= \PP((Y_s\boxplus {\vec Y}'_{t-s})\! \wedge \! W'\!\in \! \D, 
(Y_s\boxplus {\vec Y}'_{t-s})\in {\cal E} \, | \,  {\aS}= s)\\
&{}&= \PP(Y'^1_{t-s}\! \wedge \! W'\!\in \! \D, (Y_s\boxplus 
(Y'^m_{t-s}: m \geq 2))\in {\cal E} \, | \,  {\aS}= s)\\
&{}&= \PP(Y'^1_{t-s}\! \wedge \! W'\!\in \! \D ) 
\PP (Y_s\boxplus (Y'^m_{t-s}: m \geq 2))\in {\cal E} \, | \, {\aS}=s)\\
&{}&= \PP(Y_t\! \wedge \! W'\!\in \! \D \, | \, {\aS}\!=\! s) 
\PP( Y_t\in {\cal E} \, | \,  {\aS}= s)\,.
\end{eqnarray*} 
Then (\ref{condind1}) is verified. By using this equality and 
Corollary \ref{condAS} we find,
\begin{eqnarray*}
&{}& \quad \PP(Y_t \in \D \, | \,  {\aS}< t_2)  
\PP( Y_t \in {\cal E} \, | \,  {\aS}< t_2)  - 2 \varepsilon \\
&{}& \leq (\PP(Y_t \in \D \, | \,  {\aS}< t_2) - 2 \varepsilon)  
\PP( Y_t \in {\cal E} \, | \,  {\aS}< t_2)  \\
&{}& = \frac{1}{\PP (S<t_2)} \int_0^{t_2}(\PP(Y_t \in \D \, | \,  
{\aS}< t_2) - 2 \varepsilon) 
\PP( Y_t \in {\cal E} \, | \,  {\aS}=s) \PP({\aS}\in ds) \\
&{}& < \frac{1}{\PP ({\aS}<t_2)} \int_0^{t_2} 
\PP(Y_t \in \D \, | \,  {\aS}=s) 
\PP( Y_t \in {\cal E} \, | \,  {\aS}=s) \PP({\aS}\in ds) \\
&{}&= \frac{1}{\PP ({\aS}<t_2)} \int_0^{t_2} 
\PP(Y_t \in \D \cap {\cal E} \, | \, {\aS}=s)\, \PP({\aS}\in ds)\\
&{}&=\PP(Y_t \in \D \cap {\cal E} \, | \, {\aS}< t_2).
\end{eqnarray*}
In an analogous way it is proven,
$$
\PP(Y_t \in \D\cap  {\cal E} \, | {\aS}\!<\! t_2) 
<\PP(Y_t \in \D \, |  {\aS}< t_2) 
\PP( Y_t \in {\cal E} \, |  {\aS}< t_2)  + 2 \varepsilon, 
$$
which finishes the proof. $\Box$
\end{proof}

\medskip

We summarize. The window $W'$ was fixed and there was no loss of
generality in assuming $0\in \Int(W')$. Let $t>0$ and $\varepsilon>0$ be fixed.
We construct $t_1=t_1(\varepsilon,t)\in (0,\min\{t, t^*(\varepsilon)\})$. 
Now let $W$ be as in Lemma \ref{goodW}. 
Note that for all $t_2\in (0,t_1(\varepsilon)]$ we have 
${\rm e}^{-t_2 \Lambda ([W'])}> \sqrt{1-\varepsilon}$. Then, from 
Lemma \ref{goodWeps} we get that for all $t_2\in (0,t_1(\varepsilon)]$ 
there exists $r(t_2)\ge 1$ such that
$\PP ({\aS}(W',r\htW)\!<\! t_2)>1-\varepsilon$ for all $r\ge r(t_2)$.

\medskip

Now we fix $t_2\in (0,t_1(\varepsilon)]$ and define $\htW=r(t_2) W$.
We have $W'\subset \Int(\htW)$. From Lemma \ref{integralbound}
we deduce the following result.

\begin{corollary}
\label{corjunto} 
For all $\D\in {\cal B}(\T_{W'}),\, {\cal E}\in {\cal B}(\T_{\htW^c})$ it is satisfied
$$
|\PP(Y_t \in \D\cap {\cal E} \, | \,  {\aS}\!< t_2)
- \PP(Y_t \in \D \, | \, {\aS}\!< t_2)
\PP( Y_t \in {\cal E} \, | \, {\aS}<t_2) |< 2 \varepsilon\,.
$$
\end{corollary}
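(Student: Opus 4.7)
The corollary is essentially a specialization of Lemma~\ref{integralbound}: the argument there applied to a generic window $W$ with $W'\subset\Int(W)$, and here we simply instantiate it at the window $\htW=r(t_2)W$ constructed via Lemma~\ref{goodWeps}. So my plan is to verify that this substitution is legitimate and then invoke the lemma directly.

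First I would check the two hypotheses of Lemma~\ref{integralbound} for $\htW$. The inclusion $W'\subset\Int(\htW)$ is immediate: since $r(t_2)\ge 1$ we have $W\subseteq\htW$, and combining with $W'\subset\Int(W)$ yields $W'\subset\Int(W)\subseteq\Int(\htW)$. The parameter $t_2$ belongs to $(0,t_1(\varepsilon,t)]$ by the very choice made in the paragraph preceding the corollary. Hence Lemma~\ref{integralbound} applies verbatim with $\htW$ in place of $W$ and with the encapsulation time ${\aS}={\aS}(W',\htW)$, giving
$$
|\PP(Y_t\in\D\cap{\cal E}\mid{\aS}<t_2)-\PP(Y_t\in\D\mid{\aS}<t_2)\PP(Y_t\in{\cal E}\mid{\aS}<t_2)|<2\varepsilon
$$
for all $\D\in{\cal B}(\T_{W'})$ and all ${\cal E}\in{\cal B}(\T_{\htW^c})$, which is exactly the assertion of Corollary~\ref{corjunto}.

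The only small point to check is that the conditioning is well defined, i.e.\ $\PP({\aS}(W',\htW)<t_2)>0$. This is supplied by Lemma~\ref{goodWeps}: by the defining choice $\htW=r(t_2)W$ we have $\PP({\aS}(W',\htW)<t_2)>1-\varepsilon>0$ (for $\varepsilon<1$, which we may assume). There is no genuine obstacle at this stage; all the geometric and probabilistic work has already been carried out in Lemmas~\ref{goodW}, \ref{goodWeps}, \ref{nojump}, in Proposition~\ref{corevents} and finally in Lemma~\ref{integralbound}. The role of the corollary is merely to record the combination of approximate conditional independence (Lemma~\ref{integralbound}) with the explicit construction of a window $\htW$ for which the encapsulation event has probability close to $1$ (Lemma~\ref{goodWeps}), setting up the unconditional estimate to be proved in the final step of Theorem~\ref{alphacoeff}.
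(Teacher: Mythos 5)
Your proposal is correct and matches the paper's own derivation: the paper proves Corollary~\ref{corjunto} simply by noting that Lemma~\ref{integralbound} holds for any window containing $W'$ in its interior and applying it with $\htW=r(t_2)W$. Your additional checks (that $W'\subset\Int(\htW)$, that $t_2\in(0,t_1]$, and that the conditioning event has positive probability via Lemma~\ref{goodWeps}) are exactly the right points to verify and raise no issues.
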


\medskip

\begin{proposition}
\label{alphamix}
For the window $W'$, for all $t>0$ and $\varepsilon>0$, the window $\htW$ satisfies, 
$$
\forall\, \D\in {\cal B}(\T_{W'}),\, {\cal E}\in {\cal B}(\T_{\htW^c})\,:
|\PP (Y_t \in \D\cap {\cal E}) - 
\PP (Y_t \in \D) \PP (Y_t \in {\cal E})| < 4\varepsilon \,.
$$
\end{proposition}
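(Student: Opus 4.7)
The plan is to combine three ingredients already established: (i) approximate conditional independence given the encapsulation event from Corollary \ref{corjunto}, (ii) the fact that conditioning on $\{\mathcal{S}(W',\htW)<t_2\}$ perturbs marginal probabilities for events in $\mathcal{B}(\T_{W'})$ only by at most $\varepsilon$ (Proposition \ref{corevents}), and (iii) the fact that by construction of $\htW=r(t_2)W$, the encapsulation probability $p:=\PP(\mathcal{S}(W',\htW)<t_2)$ exceeds $1-\varepsilon$, so that $1-p<\varepsilon$.

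Write $\mathcal{S}=\mathcal{S}(W',\htW)$ and, for any event $\Gamma$, decompose $\PP(Y_t\in\Gamma) = p\,\PP(Y_t\in\Gamma\mid\mathcal{S}<t_2) + (1-p)\,\PP(Y_t\in\Gamma\mid\mathcal{S}\ge t_2)$. Applying this to both $\D\cap\mathcal{E}$ and $\mathcal{E}$ (and leaving $\PP(Y_t\in\D)$ as a whole, which I denote $a$), a direct rearrangement gives
\[
\PP(Y_t\in\D\cap\mathcal{E}) - \PP(Y_t\in\D)\PP(Y_t\in\mathcal{E}) = p\,(c' - a\,b') + (1-p)\,(c'' - a\,b''),
\]
where $c'=\PP(Y_t\in\D\cap\mathcal{E}\mid\mathcal{S}<t_2)$, $b'=\PP(Y_t\in\mathcal{E}\mid\mathcal{S}<t_2)$, and $c'', b''$ are the analogues with conditioning on $\mathcal{S}\ge t_2$. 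Since all these quantities lie in $[0,1]$, the second summand contributes at most $1-p<\varepsilon$ in absolute value.

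For the first summand I would insert $a':=\PP(Y_t\in\D\mid\mathcal{S}<t_2)$ via the triangle inequality,
\[
|c' - a\,b'| \le |c' - a'\,b'| + b'\,|a' - a| \le 2\varepsilon + \varepsilon = 3\varepsilon,
\]
where the first term is bounded by Corollary \ref{corjunto} and the second by Proposition \ref{corevents} (which applies because $\D\in\mathcal{B}(\T_{W'})$). Assembling the two pieces yields the claimed bound
\[
\bigl|\PP(Y_t\in\D\cap\mathcal{E})-\PP(Y_t\in\D)\PP(Y_t\in\mathcal{E})\bigr| \le p\cdot 3\varepsilon + (1-p) < 4\varepsilon.
\]

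The one delicate point is the asymmetry: Proposition \ref{corevents} is only available for events in $\mathcal{B}(\T_{W'})$, not for $\mathcal{E}\in\mathcal{B}(\T_{\htW^c})$, so the decomposition must be arranged so that $a$ (and not $b$) is replaced by its conditional counterpart. Apart from that, everything is bookkeeping: the proposition is essentially a three-line consequence of Corollary \ref{corjunto} and Proposition \ref{corevents} once the conditioning on the encapsulation event is unwound.
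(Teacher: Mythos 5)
Your argument is correct. The identity $\PP(Y_t\in\D\cap{\cal E})-\PP(Y_t\in\D)\PP(Y_t\in{\cal E})=p\,(c'-a\,b')+(1-p)\,(c''-a\,b'')$ is valid, the second summand is indeed at most $1-p<\varepsilon$ in absolute value (to avoid dividing by a possibly vanishing $\PP({\aS}\ge t_2)$, write it as $\PP(\{Y_t\in\D\cap{\cal E}\}\cap\{{\aS}\ge t_2\})-a\,\PP(\{Y_t\in{\cal E}\}\cap\{{\aS}\ge t_2\})$), and the triangle-inequality step $|c'-a\,b'|\le|c'-a'b'|+b'\,|a'-a|<2\varepsilon+\varepsilon$ is legitimate because Proposition \ref{corevents} does apply with $\htW$ in the role of $W$. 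Your route differs from the paper's in that the paper does not use Proposition \ref{corevents} at this stage at all: it sets $\Upsilon=\{{\aS}(W',\htW)<t_2\}$, multiplies the inequality of Corollary \ref{corjunto} by $\PP(\Upsilon)^2$ so that every conditional probability becomes a probability of an intersection with $\Upsilon$, and then uses only the elementary consequences of $\PP(\Upsilon)>1-\varepsilon$, namely $\PP(\Xi)-\PP(\Xi\cap\Upsilon)<\varepsilon$ and $\PP(\Xi)-\PP(\Xi\cap\Upsilon)\PP(\Upsilon)<2\varepsilon$, together with $\PP(\Gamma\cap\Upsilon)\PP(\Theta\cap\Upsilon)\le\PP(\Gamma)\PP(\Theta)$. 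That version is symmetric in $\D$ and ${\cal E}$, so the asymmetry you rightly flag (Proposition \ref{corevents} is only available for events in ${\cal B}(\T_{W'})$) never arises; your version needs one extra ingredient but makes the error bookkeeping more transparent. Both yield the same constant $4\varepsilon$.
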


\begin{proof}
Denote
$$
\Gamma= \{ Y_t \in  \D \}, \;\,
\Theta= \{ Y_t \in {\cal E} \},
\;\, \Upsilon= \{ {\aS}(W',\htW)< t_2 \}.
$$
We have
\begin{equation}
\label{relCDG}
|\PP (\Gamma \cap \Theta \, | \, \Upsilon) - \PP (\Gamma \, | \, \Upsilon)
\PP (\Theta \, | \, \Upsilon) | < 2 \varepsilon \hbox{ and }
\PP(\Upsilon)>1-\varepsilon\,.
\end{equation}
Observe that $\PP(\Upsilon)>1-\varepsilon$ implies,
$\PP (\Xi)-\PP (\Xi \cap \Upsilon)<\varepsilon$ and
$\PP (\Xi)-\PP (\Xi \cap \Upsilon)\PP(\Upsilon)<2\varepsilon$ for all
events $\Xi$, in particular when $\Xi$ is $\Gamma$, $\Theta$ or
$\Gamma\cap \Theta$.

\medskip

The first relation in (\ref{relCDG}) obviously implies 
$$\bigg|\PP (\Gamma \cap \Theta \, | \, \Upsilon) - 
\PP (\Gamma \, | \, \Upsilon)
\PP (\Theta \, | \, \Upsilon) \bigg| \  \PP (\Upsilon)^2 < 2 \varepsilon.$$
 Then,
\begin{eqnarray*}
&{}&\quad \PP (\Gamma\! \cap \! \Theta) \!-\! 4 \varepsilon \\
&{}&< \PP (\Gamma \! \cap \! \Theta)\!-\! 
\PP (\Gamma \! \cap \! \Theta \, | \, \Upsilon) \PP (\Upsilon)^2 
\!+\! \PP (\Gamma \, | \, \Upsilon) 
\PP (\Theta \, | \, \Upsilon) \PP (\Upsilon)^2 \!-\! 2 \varepsilon \\
&{}&= \PP (\Gamma \! \cap \! \Theta)\!-
\! \PP (\Gamma \! \cap \! \Theta\! \cap \! \Upsilon) 
\PP (\Upsilon) \!+\! 
\PP (\Gamma\! \cap \! \Upsilon) \PP (\Theta\! \cap \! \Upsilon)  \!-\! 
\PP (\Gamma) \PP (\Theta) \!+\! 
\PP (\Gamma) \PP (\Theta) \!-\! 2 \varepsilon \\ 
&{}&< \PP (\Gamma) \PP (\Theta)\,.
\end{eqnarray*}
In an analogous way it is shown that
$\PP (\Gamma)\PP (\Theta)< \PP (\Gamma\cap \Theta) + 4 \varepsilon$. Hence, 
the result is proven.     $\Box$         
\end{proof}

\medskip

Proposition \ref{alphamix} yields the proof of Theorem \ref{alphacoeff}, by 
substituting $4\, \varepsilon$ by $\varepsilon$.

\section{Comparison of STIT and Poisson hyperplane tessellations (PHT)}
Intuitively, we expect a gradual difference in the mixing properties 
of STIT and PHT. Hyperplanes are unbounded, while the maximum 
faces (also referred to as I-faces) in a STIT tessellation are always bounded. 
But these I-faces tend to be very large (it was already shown 
in \cite{mnwIseg} 
for the planar case that the length of the typical I-segment has a finite 
expectation but an infinite second moment). But since we have shown that the 
tail $\sigma$-algebra for STIT is trivial, then STIT has a short range 
dependence in the sense of \cite{dvj}.

\medskip

One aspect is the following. For PHT, Schneider and 
Weil \cite{sw} (Section 10.5) showed, that it is mixing if the directional 
distribution ${\theta}$ (see (\ref{prodmeas})) has zero mass on all 
great subspheres of ${\SaS}_+^{\ell -1}$. And they contribute an 
example of a tessellation where 
this last condition is not fulfilled 
and which is not mixing. In contrast to this, 
Lachi\`eze-Rey proved in \cite{lr} that STIT is mixing, 
for all ${\theta}$ which are not concentrated on a great subsphere.

\medskip

Concerning the tail $\sigma$-algebra, we have 
Theorem \ref{trivtail} for the STIT tessellations. 
In contrast, for the PHT the tail $\sigma$-algebra is not trivial.

\begin{lemma}
\label{poisstail}
Let $Y^{\rm PHT}$ denote a Poisson hyperplane tessellation with 
intensity measure $\Lambda$ which is a non-zero, 
locally finite and translation invariant measure on $\hH$, and  
it is  assumed that the support set of $\Lambda$ is such that there is 
no line in $\RR^\ell$ with the property that all hyperplanes of the support 
are parallel to it. Then the tail $\sigma$-algebra 
is not trivial with respect to the distribution of $Y^{\rm PHT}$.
\end{lemma}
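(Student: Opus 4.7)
The plan is to exhibit an explicit event ${\cal E}\in{\cal B}(\T)$ which lies in the tail $\sigma$-algebra ${\cal B}_{-\infty}(\T)$ modulo the law of $Y^{\rm PHT}$ and satisfies $0<\PP(Y^{\rm PHT}\in{\cal E})<1$. A natural candidate is the parity of the number $N(K)$ of hyperplanes of $Y^{\rm PHT}$ that intersect the fixed reference window $K=[-1,1]^\ell$. A standard property of Poisson hyperplane processes gives that $N(K)$ is Poisson distributed with parameter $\Lambda([K])\in(0,\infty)$, hence ${\cal E}:=\{N(K)\text{ is even}\}$ has probability $\tfrac{1}{2}(1+e^{-2\Lambda([K])})\in(0,1)$, which settles the non-triviality of $\PP(Y^{\rm PHT}\in{\cal E})$ as soon as ${\cal E}$ is shown to be a tail event.

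The crucial point is then to verify that ${\cal E}\in{\cal B}(\T_{W_n^c})$ mod $\PP$ for every $n\in\NN$. In sharp contrast with the bounded facets appearing in a STIT tessellation, every hyperplane of the PHT is unbounded, so each hyperplane $H$ of the process intersects $W_n^c$ in a set $H\cap W_n^c$ that is a full hyperplane minus a bounded convex region and therefore has the same affine hull as $H$. Almost surely the hyperplanes of the Poisson process are pairwise distinct and no two of them agree outside any bounded set, so each hyperplane $H$ is uniquely recovered from $H\cap W_n^c$ by taking its affine hull. I would implement this as a Borel map from $(\T_{W_n^c},{\cal B}(\T_{W_n^c}))$ into the space of locally finite hyperplane configurations which, on a $\PP$-full event, inverts the restriction $\partial T\mapsto\partial T\cap W_n^c$; concretely, exhaust $W_n^c$ by an increasing sequence of compact sets, extract the maximal $(\ell-1)$-flat components of $\partial Y^{\rm PHT}\cap W_n^c$, and take their affine hulls. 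Once this reconstruction is available, both $N(K)$ and therefore ${\cal E}$ are ${\cal B}(\T_{W_n^c})$-measurable mod $\PP$ for every $n$, so ${\cal E}\in\bigcap_n{\cal B}(\T_{W_n^c})={\cal B}_{-\infty}(\T)$ mod $\PP$.

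Combining the two ingredients yields $0<\PP(Y^{\rm PHT}\in{\cal E})<1$ together with ${\cal E}\in{\cal B}_{-\infty}(\T)$, which proves the lemma. The same line of thought in fact establishes the stronger statement ${\cal B}_{-\infty}(\T)={\cal B}(\T)$ mod $\PP$ for the PHT, sharply opposite to Theorem \ref{trivtail} for STIT.

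The main obstacle is the measurable implementation of the reconstruction map: on a $\PP$-full event one must argue that $\partial Y^{\rm PHT}\cap W_n^c$ unambiguously decomposes into the pieces of the individual restricted hyperplanes and that this decomposition depends Borel-measurably on the input. Local finiteness of $\Lambda$ ensures that each bounded subset of $W_n^c$ meets only finitely many hyperplanes, and simplicity of the Poisson process, together with the fact that two distinct hyperplanes a.s.\ differ on every $(\ell-1)$-dimensional open ball they both meet, handles the potential ambiguity; but carrying out the null-set bookkeeping rigorously is the most delicate part of the argument.
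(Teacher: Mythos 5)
Your proposal is correct and follows essentially the same route as the paper: both exhibit an explicit tail event of probability strictly in $(0,1)$, resting on the observation that the unbounded hyperplanes of the PHT can be reconstructed (measurably, mod $\PP$) from their traces on $W_n^c$ for every $n$. The only difference is cosmetic---the paper uses the simpler event $\{$some hyperplane meets the unit ball$\}$ with probability $1-{\rm e}^{-\Lambda([B_1])}$ rather than your parity event, and it glosses over the same measurable-reconstruction point that you rightly flag as the delicate step.
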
 
\begin{proof}
Let $(W_n: n\in \NN)$ be an increasing sequence of windows such that 
for all $n\in \NN$,
$W_n\subset \Int\, W_{n+1}$, and $\RR^\ell = \bigcup_{n\in \NN} W_n$.
Let $B_1$ be the unit ball in $\RR^\ell$ centered at $0$. Consider the 
following event in ${\cal B}(\T )$:
$$
\D:=\{\exists \, \hbox{ an hyperplane in } Y^{\rm PHT} \hbox{ which 
intersects } B_1\}.
$$
Because outside of any bounded window $W_n$ all the 
hyperplanes belonging to $Y^{\rm PHT}$ can be identified and thus it 
can be decided (in a measurable way) whether there is a hyperplane which 
intersects $B_1$, we have that  $\D\in {\cal B}(\T_{W_n^c})$ for all $n\in \NN$, 
and hence $\D\in {\cal B}_{-\infty}(\T)$.
We have $\PP(\D)=1-{\rm e}^{-\Lambda ([B_1])}$,
and this probability is neither 0 nor 1. $\Box$
\end{proof}

\vspace{1cm}

{\bf Acknowledgments}. The authors thank
Lothar Heinrich for helpful hints and discussion.
The authors are indebted for the support of Program Basal CMM 
from CONICYT (Chile) and by DAAD (Germany).

\medskip

\end{document}